\newtheorem{lemma}{Lemma}
\newtheorem{proposition}[lemma]{Proposition}
\newtheorem{corollary}[lemma]{Corollary}
\newtheorem{theorem}[lemma]{Theorem}
\newtheorem{prob}{Problem}
  \newcommand{\textcyr}[1]{%
    {\fontencoding{OT2}\fontfamily{wncyr}\fontseries{m}\fontshape{n}%
     \selectfont #1}}
\newcommand{\Sha}{{\mbox{\textcyr{Sh}}}}
\newcommand{\F}{\ensuremath{\mathbb F}}
\newcommand{\PP}{\mathbb{P}}
\newcommand{\N}{\ensuremath{\mathbb N}}
\newcommand{\Q}{\ensuremath{\mathbb Q}}
\newcommand{\Z}{\ensuremath{\mathbb Z}}
\newcommand{\C}{\ensuremath{\mathbb C}}
\newcommand{\ra}{\ensuremath{\rightarrow}}
\newcommand{\Hom}{\operatorname{Hom}}
\newcommand{\Ker}{\operatorname{Ker}}
\newcommand{\unr}{\operatorname{unr}}
\newcommand{\Gal}{\operatorname{Gal}}
\newcommand{\Aut}{\operatorname{Aut}}
\newcommand{\OO}{\mathcal{O}}
\newcommand{\Gm}{\mathbb{G}_m}
\newcommand{\Pic}{\operatorname{Pic}}
\newcommand{\FPic}{\textbf{Pic}}
\newcommand{\Br}{\operatorname{Br}}
\newcommand{\Li}{\operatorname{Li}}
\newcommand{\sG}{\mathcal{G}}
\newcommand{\sS}{\mathscr{S}}
\newcommand{\sm}{\ensuremath{\mathfrak{m}}}
\renewcommand{\sp}{\ensuremath{\mathfrak{p}}}
\renewcommand{\Q}{\ensuremath{\mathbb{Q}}}
\renewcommand{\Z}{\ensuremath{\mathbb{Z}}}
\newcommand{\Pro}{\ensuremath{\mathbb{P}}}
\newcommand{\Kb}{\overline{K}}
\renewcommand{\C}{\ensuremath{\mathbb{C}}}
\renewcommand{\F}{\ensuremath{\mathbb{F}}}
\newcommand{\G}{\ensuremath{\mathbb{G}}}
\newcommand{\cbd}{c}
\DeclareMathOperator{\Nm}{Nm}
\DeclareMathOperator{\Hp}{H}
\DeclareMathOperator{\Gl}{GL} 
\DeclareMathOperator{\Sl}{SL}
\DeclareMathOperator{\Pgl}{PGL}
\DeclareMathOperator{\Ob}{\Delta} 
\renewcommand{\Br}{\operatorname{Br}}
\DeclareMathOperator{\res}{res}
\renewcommand{\Pic}{\operatorname{Pic}}
\renewcommand{\FPic}{\bold{Pic}}
\renewcommand{\ra}{\rightarrow}
\DeclareMathOperator{\cores}{cores}
\renewcommand{\Gm}{\mathbb{G}_m}
\renewcommand{\PP}{\mathbb{P}}
\renewcommand{\Im}{\operatorname{Im}}
\renewcommand{\Ker}{\operatorname{Ker}}
\DeclareMathOperator{\Jac}{\operatorname{Jac}}
\newcommand{\supp}{\operatorname{supp}}
\newcommand{\Sel}{\operatorname{Sel}}
\renewcommand{\Li}{\operatorname{Li}}
\renewcommand{\OO}{\mathfrak{o}}
\renewcommand{\unr}{\operatorname{unr}}
\newcommand{\kummer}[2]{({#1}^\times/{#1}^{\times{#2}})^2}
\newcommand{\Pstar}{P^*}
\email{pete@math.uga.edu}
\email{sharif@math.duke.edu}
\title{Period, Index and Potential Sha}
\author{Pete L. Clark}
\thanks{The first author is partially supported by National Science Foundation grant DMS-0701771}
\author{Shahed Sharif}
\begin{document}
\maketitle

\begin{abstract}
In this paper we advance the theory of O'Neil's period-index obstruction map and derive consequences for 
the arithmetic of genus one curves over global fields.  Our first result implies that for every pair of positive integers 
$(P,I)$ with $P \ | \ I \ | \ P^2$, there exists a number field $K$ and a genus one curve $C_{/K}$ with period $P$ 
and index $I$.  Second, let $E_{/K}$ be any elliptic curve over a global field $K$, and let $P > 1$ be any integer 
indivisible by the characteristic of $K$.  We construct infinitely many genus one curves $C_{/K}$ with period $P$, 
index $P^2$, and Jacobian $E$.  We deduce strong consequences on the structure of Sharevich-Tate 
groups under field extension.
\end{abstract}

\tableofcontents

\section{Introduction}

\subsection{Notation and conventions} \textbf{} \\ \\ \noindent Throughout the paper $K$ shall denote a global field ---
i.e., a finite field extension of either $\Q$ or $\F_p(T)$ --- and
$E$ shall denote an elliptic curve defined over $K$. 
\\ \\
Let $P$ be a positive integer which is \emph{not} divisible by the
characteristic of $K$.  We define $\Pstar$ to be $P$ if $P$ is
odd and $2P$ if $P$ is even.
\\ \\
Let $\overline{K}$ denote a fixed \emph{separable} closure of $K$, and let $\mathfrak{g}_K = \Aut(\overline{K}/K)$ be the absolute 
Galois group of $K$.
\\ \\
We abbreviate the Galois cohomology group
$\Hp^1(\mathfrak{g}_K,E(\overline{K}))$ to $\Hp^1(K,E)$ and
call it the \textbf{Weil-Ch\^atelet group} of $E$ over $K$.  Recall that this is a torsion abelian group.  \\ \\
The letter $\eta$ shall denote an element of $\Hp^1(K,E)$.  Such classes
$\eta$ are in canonical bijection with the set of pairs
$(C,\iota)$, where $C_{/K}$ is a genus one curve and $\iota:
\FPic^0(C) \ra E$ is an isomorphism from the Albanese/Picard
variety of $C$ to $E$.  In other words, $\iota$ endows $C$ with
the structure of a principal homogeneous space (or torsor) under
$E$.  It follows that $C_{/K}$ itself determines, and is
determined by, an orbit of $\Aut(E)$ on $H^1(K,E)$.  
\\ \\
The \textbf{period} of $\eta \in H^1(K,E)$ is its order in the group.  
In terms of the corresponding torsor $(C,\iota)$, the period is the least positive degree 
of a $K$-rational divisor class on $C$.  The \textbf{index} of $\eta$ is the gcd over all 
degrees $[L:K]$ of field extensions $L/K$ such that the restriction of $\eta$ to $H^1(L,E)$ 
is trivial.  In terms of $(C,\iota)$, the index is the least degree of a $K$-rational divisor.  
By Riemann-Roch, it is also the least degree of an extension $L/K$ such that $C$ has an $L$-rational 
point.
\\ \indent
Notice that both the period and the index of $(C,\iota)$ depend only on the underlying 
curve $C$.  Therefore no harm will come from the abuse of language ``the cohomology class $\eta$ corresponding
to $C_{/K}$,'' and we shall use this simplified language in the sequel.
\\ \\
We denote by $\Sigma_K$ the set of all places of $K$
(including Archimedean places in the number field case). For a
place $v$ of $K$, we denote the image of a class $\eta \in
\Hp^1(K,E)$ under the local restriction map $H^1(K,E) \ra
H^1(K_v,E)$ by $\eta_v$. In geometric terms, $\eta_v$ is just the
base extension of the curve (or rather, the principal homogeneous
space$\ldots$) $C$ from $K$ to $K_v$.  By the \textbf{support} of
a class we mean the finite set of $v \in \Sigma_K$ such that
$\eta_v \neq 0$.  The classes $\eta$ with empty support form a
subgroup $\Sha(K,E)$, the \textbf{Shafarevich-Tate group} of $E_{/K}$.

\subsection{Statement of the main results}
\begin{theorem}
\label{MT1} Suppose $\#E(K)[\Pstar] = (\Pstar)^2$.  Then, for any
positive integer $D \ | \ P$, there are infinitely many classes
$\eta \in \Hp^1(K,E)$ of period $P$ and index $P \cdot D$. These
classes can be chosen so as to be locally trivial except possibly
at two places of $K$.
\end{theorem}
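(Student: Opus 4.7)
The plan is as follows. The hypothesis $\#E(K)[\Pstar] = (\Pstar)^2$ forces the full $\Pstar$-torsion of $E$ to be $K$-rational, so by the Weil pairing $\mu_{\Pstar} \subset K$. After choosing a symplectic basis of $E[P]$, the Kummer sequence identifies $H^1(K, E[P])$ with $\kummer{K}{P}$, and O'Neil's obstruction map $H^1(K,E)[P] \to H^2(K,\mu_P) = \Br(K)[P]$ becomes, up to the ambiguity from different lifts through $E(K)/PE(K)$, the cup-product symbol $(a,b) \mapsto (a,b)_P$. By the period-index theorem in this setting, a class $\eta$ of period $P$ has index equal to $P$ times the order of its obstruction in $\Br(K)$. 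The task therefore reduces to producing $(a,b) \in \kummer{K}{P}$ whose image $\eta \in H^1(K,E)[P]$ has period exactly $P$, obstruction of order exactly $D$, and $\eta_v = 0$ outside two places of $K$.

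I would proceed in the standard local-to-global style. Using Chebotarev, select two places $v_1, v_2$ of $K$ of good reduction for $E$, residue characteristic prime to $P$, and with $\mu_P \subset k_{v_i}$. At each $v_i$, choose tame local classes $\xi_{v_i} \in H^1(K_{v_i}, E[P])$ whose local obstructions have invariants of exact order $D$ in $\tfrac{1}{P}\Z/\Z$ summing to zero; additionally arrange at $v_1$ that the image of $\xi_{v_1}$ in $H^1(K_{v_1}, E)$ has order exactly $P$. Such choices exist because local Tate duality pairs $H^1(K_{v_1}, E)[P]$ perfectly with $E(K_{v_1})/PE(K_{v_1}) \cong (\Z/P\Z)^2$, and there is enough room in $\kummer{K_{v_1}}{P}$ to impose the two conditions simultaneously. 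I would then invoke Poitou-Tate duality with the Kummer local conditions $L_v = \Im(E(K_v)/PE(K_v) \to H^1(K_v, E[P]))$ at all $v \neq v_1, v_2$ to find a global $\xi \in H^1(K, E[P])$ realizing the prescribed local data at $v_1, v_2$ and lying in $L_v$ elsewhere; the latter condition forces $\eta_v = 0$ in $H^1(K_v,E)$ outside $\{v_1, v_2\}$.

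The main obstacle will be the global gluing: the prescribed local data at $v_1, v_2$ must annihilate the dual Selmer group for the chosen Kummer conditions, which is not automatic. I expect to handle this by introducing auxiliary Chebotarev primes at which the local cocycles are adjusted in a controlled way, a standard technique for enforcing effective Poitou-Tate surjectivity; the full $\Pstar$-torsion hypothesis is what guarantees the local Kummer conditions have the predicted dimensions and the duality pairing behaves symmetrically enough to admit such adjustments. Infinitely many classes then arise by varying $(v_1, v_2)$ over the infinite supply of admissible pairs produced by Chebotarev, with the distinct supports guaranteeing pairwise inequivalence.
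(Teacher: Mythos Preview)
Your proposal has a genuine gap at the step ``by the period-index theorem in this setting, a class $\eta$ of period $P$ has index equal to $P$ times the order of its obstruction in $\Br(K)$.'' No such theorem is available. What is known (Proposition~8 in the paper) is only the inequality
\[
I/P \leq \min_{x \in E(K)/PE(K)} \#\Delta_P(\xi + x),
\]
and whether equality always holds is precisely the content of the open Problem~5 at the end of the paper (equivalently, whether the sequence~(\ref{ISITSPLIT}) always splits). So even if your Poitou--Tate gluing succeeds and you produce a global $\xi$ whose obstruction has order exactly $D$, you only get $I \mid PD$; the lower bound $I \geq PD$ is missing, and that is the whole difficulty.

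The paper's proof does not attempt to prove this equality in general. Instead it builds the class explicitly as $\xi = \Phi(\pi^{P/D}, \pi')$ for a carefully chosen pair of primes $(\pi), (\pi')$, and then argues the lower bound by contradiction using two special features of the construction: (i) the support of $\Delta_P(\xi)$ is contained in $\{(\pi),(\pi')\}$, and (ii) the prime $(\pi)$ is chosen (condition~(SC2)) so that every $x \in E(K)$ is $P$-divisible in $E(K_{(\pi)})$, which kills the local Tate pairing $\Li(j(\xi),x)$ at $(\pi)$. If the index were strictly less than $PD$, one finds a lift $\nu$ with $\Delta_I(\nu)=0$; writing $\nu = j(\xi) + \iota(x)$ and restricting to $(\pi)$, feature~(ii) forces $\Delta_I(j(\xi)) = (I/P)\Delta_P(\xi)$ to vanish at $(\pi)$, hence to be supported only at $(\pi')$, hence to vanish globally by reciprocity --- contradicting that $\Delta_P(\xi)$ has order $D > I/P$ at $(\pi)$. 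Your scheme has no analogue of~(SC2), and without it you cannot separate the obstruction from the correction term coming from $E(K)/PE(K)$. As a secondary remark, the Poitou--Tate gluing step with its acknowledged dual-Selmer obstruction is also unnecessary: once the primes are chosen via Chebotarev (Lemma~11), the global class is written down directly.
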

\noindent 

\begin{theorem}

\label{MT2} Let $E_{/K}$ be an elliptic curve and $S_K \subset
\Sigma_K$ a finite set of places of $K$.  There exists an infinite
sequence $\{\eta_i\}_{i=0}^{\infty}$ of elements of $\Hp^1(K,E)$ such that: \\
$\bullet$ $\eta_0 = 0$. \\
$\bullet$ For all $v \in S_K$ and all $i \in \N$, $\res_v \eta_i = 0$. \\
$\bullet$ For all $i, \ j \in \N$ with $i \neq j$, $\eta_i - \eta_j$ has
period $P$ and index $P^2$.
\end{theorem}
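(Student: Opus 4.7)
The plan is to build $\eta_1, \eta_2, \ldots$ inductively as images of classes $\tilde{\eta}_i \in \Hp^1(K, E[P])$ under the connecting map $\Hp^1(K, E[P]) \to \Hp^1(K, E)[P]$; working through $E[P]$-torsors automatically secures $P \eta_i = 0$. Set $\eta_0 = 0$. Assuming $\eta_0, \ldots, \eta_n$ are constructed, I would produce $\eta_{n+1}$ by prescribing local behavior: trivial restrictions at $S_K$ and at every previously used auxiliary place, together with prescribed nontrivial restrictions at two fresh auxiliary places $v_{2n+1}, v_{2n+2}$ whose images in $\Hp^1(K_{v_{2n+k}}, E)$ have period $P$ and index $P^2$. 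By disjointness of the auxiliary pairs $\{v_{2j-1}, v_{2j}\}$ (and their disjointness from $S_K$), for any $i > j \geq 0$ the local restriction of $\eta_i - \eta_j$ at $v_{2i-1}$ equals the nontrivial class placed there when constructing $\eta_i$, which already has period $P$ and index $P^2$.

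The local building blocks arise as follows. Since $\#E(K)[\Pstar] = (\Pstar)^2$ is not assumed, Theorem \ref{MT1} does not apply directly over $K$. However, Chebotarev's theorem lets me choose the auxiliary places $v_{2n+1}, v_{2n+2}$ to split completely in $K(E[\Pstar])/K$, which ensures $E(K_{v_{2n+k}})[\Pstar] = (\Pstar)^2$. At such a place, the local version of the analysis underlying Theorem \ref{MT1} -- driven by O'Neil's period-index obstruction map applied locally -- supplies an $E[P]$-torsor whose associated $E$-torsor has period $P$ and index $P^2$. To realize the prescribed tuple of local $E[P]$-classes as the localization of a global class in $\Hp^1(K, E[P])$, I would invoke Poitou--Tate global duality: the obstruction is a finite-dimensional pairing against a dual cohomology group, and by imposing further Chebotarev conditions on $v_{2n+1}, v_{2n+2}$ so that a fixed basis of that dual group becomes locally trivial at them (combined with a reciprocity-type adjustment to one of the two local data), the obstruction vanishes and a global lift $\tilde{\eta}_{n+1}$, and hence $\eta_{n+1}$, exists.

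The period-index verification is then immediate: globally $P \eta_i = 0$, so $P(\eta_i - \eta_j) = 0$ and the global period divides $P$; the local period at $v_{2i-1}$ is $P$, forcing the global period to equal $P$. Since period $P$ forces index to divide $P^2$, and the local index at $v_{2i-1}$ is $P^2$, the global index equals $P^2$. The main obstacle is coordinating two refined constructions at once -- producing at each fresh pair of auxiliary places an $E[P]$-torsor whose image $E$-torsor has local profile exactly $(P, P^2)$, and simultaneously killing the Poitou--Tate obstruction to globalization -- both of which pivot on the latitude provided by Chebotarev's theorem to impose any finite collection of Frobenius conditions on auxiliary places. The same freedom is what keeps $\eta_{n+1}$ locally trivial at $S_K$ and at all earlier auxiliary places, preserving the inductive hypothesis.
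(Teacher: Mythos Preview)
Your proposal contains a fatal error at its core: you assert that at a local place $v_{2n+k}$ one can produce an $E$-torsor whose image in $\Hp^1(K_{v_{2n+k}},E)$ has period $P$ and index $P^2$, and you then deduce the global index from this local one. But the paper itself recalls Lichtenbaum's theorem: over the completion of a global field, \emph{every} genus one curve has period equal to index. There is no local torsor of period $P$ and index $P^2$, and so your final step ``the local index at $v_{2i-1}$ is $P^2$, hence the global index equals $P^2$'' is empty. More generally, the global index cannot be read off from local indices in this way; what you can bound from below by local data is the period, not the index.

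The paper's argument is genuinely different and avoids this trap. Rather than prescribing local $E$-torsors and globalizing via Poitou--Tate, the authors pass to $K_P = K(E[\Pstar])$, build explicit classes $\theta_n = \Phi(\pi_n,\pi_n')$ or $\Phi(\pi_n,1)$ there using carefully chosen principal primes, and then take $\xi_n = \cores_{K_P/K}\theta_n$. The index computation is global: they show that the Brauer class $\Delta_P(\res(\xi_m-\xi_n))$ has \emph{order} exactly $P$ in $\Br(K_P)$ by computing its invariant at a single prime $v_m$ via the Hilbert symbol (Lemma~\ref{lemma:sum a}). If the index were $PD$ with $D<P$, some other Kummer lift $\nu$ would satisfy $\Delta_{PD}(\nu)=0$; but $\nu$ differs from $j(\xi)$ by $\iota(x)$ for some $x\in E(K)$, and condition (SC2$'$) forces $x\in P\cdot E(K_w)$, so the Lichtenbaum--Tate pairing $\Li(j(\xi),x)$ vanishes locally at $w$. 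This yields $\Delta_{PD}(j(\xi))_w=0$, contradicting $\#\Delta_P(\xi)_w=P$. The point is that the obstruction map $\Delta$, not the local index, carries the information needed to force $I=P^2$; and the delicate part---controlling $\Li$ over all Kummer lifts---is handled by arranging $E(K)\subset P\cdot E(K_w)$ at the key prime, not by any local period-index phenomenon.
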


\begin{theorem}\label{MT3} 
For any positive integer $r$, there exists a degree
$P$ field extension $L/K$ such that $\Sha(L,E)$ contains at least
$r$ elements of order $P$.
\end{theorem}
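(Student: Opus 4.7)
The plan is to deduce Theorem \ref{MT3} from Theorem \ref{MT2} by passing to a carefully chosen degree-$P$ global extension $L/K$ under which many classes produced by MT2 restrict into $\Sha(L,E)$ without dropping in order.

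First I apply Theorem \ref{MT2} with $S_K$ equal, say, to the archimedean places of $K$ together with the places of bad reduction for $E$. This yields an infinite sequence $\{\eta_i\}_{i\geq 0}$ in $H^1(K,E)$ with $\eta_0 = 0$, with $\res_v \eta_i = 0$ for all $v \in S_K$ and all $i$, and with $\eta_i - \eta_j$ of period $P$ and index $P^2$ whenever $i \neq j$. I fix $N \geq r$ of these classes $\eta_1,\dots,\eta_N$ (where $N$ will be chosen later, in terms of $r$ and $P$) and set $T := \bigcup_{i=1}^N \supp(\eta_i)$, a finite set of non-archimedean places of $K$ disjoint from $S_K$.

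Next I construct a single global degree-$P$ extension $L/K$ whose completion at every place above $T$ kills the finite subgroup $\Gamma_v := \langle \eta_{1,v},\dots,\eta_{N,v}\rangle \subset H^1(K_v,E)[P]$. At each $v \in T$, such a degree-$P$ local killing extension exists provided the $\eta_{i,v}$ lie in a suitably cyclic subgroup of $H^1(K_v,E)[P]$, which I expect to be arrangeable from the flexibility built into the proof of MT2. The key local input is Lichtenbaum's theorem that period equals index for elliptic curves over non-archimedean local fields. Given prescribed local degree-$P$ extensions at each $v \in T$, a global $L/K$ simultaneously realizing all of them is produced by a Grunwald--Wang style argument, with standard care at dyadic places when $2 \mid P$.

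Two properties then remain to verify. For membership in $\Sha(L,E)$: at places of $L$ above $S_K$ each $\eta_i$ is already trivial over $K$; at places above $T$ the local restrictions die by our choice of $L$; and at every other place $\eta_i$ had trivial local class to begin with. Hence $\res_L \eta_i \in \Sha(L,E)$ for each $i$. For distinctness and order exactly $P$: since $\eta_i-\eta_j$ has index $P^2$ over $K$ and $[L:K] = P$, the standard inequality
\[
\operatorname{ind}_K(\eta_i - \eta_j) \;\leq\; [L:K]\cdot\operatorname{ind}_L\bigl(\res_L(\eta_i - \eta_j)\bigr)
\]
forces $\operatorname{ind}_L(\res_L(\eta_i - \eta_j)) \geq P$, so $\res_L \eta_1,\dots,\res_L \eta_N$ are $N$ distinct nonzero elements of $\Sha(L,E)[P]$. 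For $N$ large enough in terms of $r$ and $P$, the subgroup they generate inside the finite abelian group $\Sha(L,E)[P]$ of exponent $P$ must contain at least $r$ elements of order exactly $P$, by an elementary counting argument on elements of maximal order in a finite abelian $P$-torsion group.

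The principal obstacle lies in the second step: engineering a single global degree-$P$ extension $L/K$ whose local completions simultaneously kill all the $\Gamma_v$. This is a two-tiered Grunwald--Wang problem, requiring both the local assertion that $\Gamma_v$ be killed by some degree-$P$ extension (which forces one to arrange, in the MT2 construction, that the local images of the $\eta_i$ at each $v \in T$ lie in a cyclic subgroup of $H^1(K_v,E)[P]$) and the global realization of all such local extensions as a single degree-$P$ extension of $K$.
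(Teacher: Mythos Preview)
Your overall strategy---apply Theorem~\ref{MT2}, then pass to a degree-$P$ extension $L/K$ that trivializes the finitely many bad local restrictions---is exactly the paper's approach. But you have misidentified the crux, and what you call the ``principal obstacle'' is in fact not an obstacle at all.

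The point you are missing is the precise form of the Lang--Tate result: if $v$ is a finite place of \emph{good} reduction for $E$ with residue characteristic prime to $P$, then \emph{any} extension $F/K_v$ with $P \mid e(F/K_v)$ kills \emph{all} of $H^1(K_v,E)[P]$, not merely some cyclic piece. So if you enlarge your $S_K$ to include also the places dividing $P$ (you omitted these), then every $v \in T$ is a good place of residue characteristic prime to $P$, and a totally ramified degree-$P$ local extension $L_v/K_v$ automatically annihilates $\Gamma_v$. There is no need to arrange the local images to be cyclic, and no need to revisit the construction in Theorem~\ref{MT2}.

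Globalizing the $L_v$ to a single degree-$P$ extension $L/K$ is then not a Grunwald--Wang problem either: one simply writes each $L_v$ as $K_v[x]/(f_v)$ for an Eisenstein polynomial $f_v$ and uses weak approximation together with Krasner's lemma to find a single degree-$P$ polynomial over $K$ that is $v$-adically close to $f_v$ for each $v \in T$.

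Finally, your counting detour is unnecessary. Taking $j=0$ in your index inequality gives $\operatorname{ind}_L(\res_L\eta_i)\geq P$; since $\res_L\eta_i \in \Sha(L,E)$, Cassels' theorem yields $P(\res_L\eta_i) = I(\res_L\eta_i) \geq P$, while $P(\res_L\eta_i)\mid P(\eta_i)=P$. Hence each $\res_L\eta_i$ has order exactly $P$, and you may simply take $N=r$.
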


\subsection{Discussion of the results} \textbf{} \\ \\ \noindent
Let $C$ be a genus one curve over an arbitrary field $K$.  It is well known (e.g, \cite[Cor. 13]{WCII})
that the period $P$ and the index $I$ of $C$ satisfy the divisibilities 
\begin{equation}
\label{DIVEQ} P \ | \ I \ | \ P^2. 
\end{equation}
In their seminal 1958 paper \cite{LT}, Lang and Tate showed that for any pair $(P,I)$ of 
positive integers satisfying (\ref{DIVEQ}), there exists a genus one curve $C$ defined over the 
iterated Laurent series field $\C((t_1))((t_2))$ with period $P$ and index $I$.  
\\ \\
This raises the question of the possible values of $P$ and $I$ for genus one curves over a 
local or global field.  Lichtenbaum \cite{Lichtenbaum} showed that $P = I$ for every genus one curve 
over a nondiscrete, locally compact field.\footnote{More precisely, Lichtenbaum proved this under 
the assumption that $P$ is not divisible by the characteristic of $K$ -- the same assumption which is in force 
for us -- but Milne later extended Tate's local duality theory to this case \cite{Milne} and accordingly was able to remove this 
hypothesis.} 
\\ \\
Suppose $K$ is a field which admits at least one degree $P$ cyclic extension and such that there exists an elliptic curve 
$E_{/K}$ with full $P$-torsion: $\# E[P](K) = P^2$.  Then Lang and Tate were able to show that there exists a class 
$\eta \in H^1(K,E)$ with period and index both equal to $P$.  
\\ \\
Let us assume henceforth that $K$ is a global field.  In this case, the argument of Lang and Tate readily yields 
the fact that $\eta$ may be taken to have support at at most one place of $K$.
\\ \\
Conversely, Cassels \cite[Theorem 1.3]{Cassels} showed that $I = P$ for
classes with empty support.  Moreover $I = P$ for classes whose support has cardinality one, as was 
first shown by L. Olson \cite[Thm. 15]{Olson} and more recently ``rediscovered'' by the first author 
\cite[Prop. 6]{Crelle}.
\\ \\
The first examples of genus one curves over a global field with $I > P$ are due to Cassels \cite{CasselsV}, who found 
examples over $K = \Q$ with $P = 2$, $I = 4$.  Cassels' examples are closely related to the theory of explicit 
$2$-descent, a connection which is reconsidered in a forthcoming work of the first author \cite{P2}.  More 
recently, the first author constructed, for any prime number $p$, classes $\eta$ with $P = p, \ I = p^2$ in the Weil-Chat\^elet 
group of any elliptic curve $E_{/K}$ over a number field with full $p$-torsion \cite[Theorem 3]{WCI}.  The method crucially employs 
a period-index obstruction map due to C.H. O'Neil \cite{O'Neil}.
\\ \\
Our Theorem \ref{MT1} is therefore to be viewed as a substantial generalization of \cite[Theorem 3]{WCI}.  In particular, 
we now know that any pair $(P,I)$ satisfying (\ref{DIVEQ}) arises as the period and index of a genus one curve defined over 
some number field (depending on $P$).  Moreover, the fact that we can construct such classes which are supported at two 
places is, in view of the aforementioned results of Cassels and Olson, optimal, and answers a question raised by M. \c Ciperiani.
\\ \\
Having established Theorem \ref{MT1}, we naturally wish to understand the possible values of period and
index for genus one curves defined over a \emph{fixed} global field $K$,
or---better yet---inside the Weil-Ch\^atelet group $\Hp^1(K,E)$
of a fixed elliptic curve $E_{/K}$. 
\\ \\ \noindent
Our Theorem \ref{MT2} shows that for any elliptic curve $E$ over a global
field $K$ and any $P > 1$ indivisible by the
characteristic of $K$, there exist infinitely many genus one
curves with period $P$, index $P^2$ and Jacobian $E$.  Of course the statement of 
Theorem \ref{MT2} is significantly more complicated than this, and its significance is probably hard 
to appreciate.  However, we need this precise statement, especially the ``difference
properties'' of the sequence $\{\eta_i\}$, in the proof of Theorem \ref{MT3}.  
\\ \\
In order to place Theorem \ref{MT3} into context, let us again recall some prior results, this time on the problem of 
constructing ``large Shafarevich-Tate groups.''  More precisely, we fix a global field $K$, an integer $P > 1$ 
and a positive integer $r$, and the goal is prove the existence of an elliptic curve $E_{/K}$ whose Shafarevich-Tate 
group $\Sha(K,E)$ contains at least $r$ elements of order $P$.
\\ \\
The first results here are due to Cassels \cite{CasselsVI}, who in 1984 solved the aforementioned problem 
for $K = \Q$ and $P = 3$.  (This was also the first proof of the weaker fact that $\Sha(\Q,E)$ is unbounded as $E$ 
ranges over all elliptic curves $E_{/\Q}$.)  Cassels' examples all have $j = 0$ and exploit the extra structure on such curves afforded by 
the existence of an order $3$ automorphism.  The problem has also been solved for $P = 2$ by B\"olling
\cite{Bolling}, and for $P = 5$ by Fischer \cite{Fischer}.  In his 2003 Georgia PhD thesis, Steve Donnelly established 
the result for $P = 7$.  Among prime values of $P$, this is a transitional case: the modular curve $X(P)$ has genus 
$0$ precisely for $P = 2,3,5$, a phenomenon which the aforementioned proofs implicitly take advantage of.  Now $X(7)$ is 
Klein's quartic curve (of genus $3$) but at least $X_1(7)$ still has genus zero.  For prime $P > 7$ no elliptic curve 
$E_{/\Q}$ has a rational $P$-torsion point, a difficulty which seems insurmountable by present methods.  
\\ \\
So, reasonably, there has also been some work showing that either
the $p$-Selmer group $\Sel^p(K,E)$ or $\Sha(K,E)[p]$ can be made
arbitrarily large when one varies over all elliptic curves $E$
defined over number fields $K$ whose degree $[K:\Q]$ is bounded by
a certain function of $P$.  Notably, R. Kloosterman and E. Schaefer
showed \cite{KS} that $\dim_{\F_p} \Sel^p(K,E)$ is unbounded as
$K$ ranges over all field extensions $K/\Q$ of degree $f_1(p) =
O(p)$; later Kloosterman showed \cite{Kloosterman} that
$\dim_{\F_p} \Sha(K,E)[p]$ is unbounded as $K$ ranges extensions
of degree $f_2(p) = O(p^4)$.
\\ \\
In \cite[Thm. 1]{WCI}, the first author showed that
if $\#E(K)[p] = p^2$, $\Sha(L,E)[p]$ is unbounded as $L$ ranges
over all degree $p$ field extensions.  The argument can be applied
to any elliptic curve defined over a global field (of
characteristic not divisible by $p$) at the cost of first
trivializing the Galois action on the $p$-torsion.  We deduced
that, for every $E_{/K}$, $\Sha(L,E)[p]$ is unbounded as $L$
ranges over extensions of degree at most $f_3(p) = p(p^2-1)(p^2-p)
\leq p^5$.  Moreover, upon restricting to elliptic curves with
potential complex multiplication, one gets the bound $f_4(p) \leq 2p^3$.  
\\ \\
In contrast, our Theorem \ref{MT3} extends the bound $[L:K] = P$ of \cite[Thm. 1]{WCI} 
to \emph{all} elliptic curves and all integers $P > 1$.  An interesting 
question (which we are not able to answer) is whether Theorem \ref{MT3} is in fact the optimal result of its kind.   

\subsection{Remarks on prior individual work} \textbf{} \\ \\ \noindent
Each of the authors did substantial work on the period-index problem for genus 
one curves before entering into this collaboration.  But whereas the first author's prior work has already been published 
\cite{WCI}, \cite{WCII}, \cite{Crelle}, the second author's work was done as part of his 2006 Berkeley thesis \cite{Sharif}.  
Upon reading \cite{Sharif}, the first author saw the prospect for some additional improvements, at which point the 
collaboration began.  The present paper thus includes both work of the second author's thesis as well as some further results 
which were obtained in collaboration.  The first author wishes to make sure that the second author's innovative and 
technically powerful contributions receive their due credit, so we have decided to depart from usual practice and be 
rather specific about the individual contributions.
\\ \indent
The first statement of Theorem~\ref{MT1} appears as \cite[Theorem
3]{WCI} under the additional assumption that $P$ is prime. The
general case of Theorem \ref{MT1} appears \cite[Theorem 4.2]{Sharif}.  Moreover, in
\cite{Sharif} the second author developed new techniques to
circumvent the rationality of the $P$-torsion and was able to give
examples of $I = P^2$ over $\Q$ for all odd $P$.  Theorem \ref{MT2} is the heart of 
the collaboration (as well as the paper): the first author supplied the statement and some 
strategic suggestions, whereas the argument itself was supplied by the second author, roughly along 
the lines of the special case appearing in \cite{Sharif}.  The deduction
of Theorem \ref{MT3} from Theorem \ref{MT2} is due to the first author.
\subsection{Organization of the paper} \textbf{} \\ \\ \noindent
We assume some familiarity with the literature on the
period-index problem, especially \cite{O'Neil} and \cite{WCI}; nevertheless, we 
begin with a brief review of the period-index obstruction map, and then go on 
to discuss some new ideas and techniques.  The first key point is a clarification
of the relationship between O'Neil's obstruction map $\Delta$ and
the quantity $I/P$. Whereas before it had been implicit in
\cite{O'Neil} (and explicit in \cite{WCI}) that one can use
$\Delta$ to determine whether or not $I = P$, here we present a
simple characterization of $I/P$ in terms of the obstruction to a
rational divisor class being represented by a rational divisor.
We also return to the point of the explicit computation of
O'Neil's obstruction map in the case full level $N$ structure for
even $N$. These matters are detailed in Section 2.
\\ \\
In Section 3 we give the proofs of Theorems 1, 2 and 3.
\\ \\
In Section 4, we survey what remains to be done on
the period-index problem for curves of genus one, and formulate
several open problems.

\section{On the period-index obstruction map} \noindent
In this section $K$ is an arbitrary field, $E_{/K}$ is an elliptic
curve, and $P$ is a positive integer not divisible by the
characteristic of $K$.  These hypotheses ensure that the finite
flat $K$-group scheme $E[P]$ is \'etale, so may be viewed as a
$\mathfrak{g}_K$-module.

\subsection{Three aspects of the period-index obstruction map}

\label{3ASPECTS}

The object of our affections is the \textbf{period-index
obstruction map}

\[\Delta_P: \Hp^1(K,E[P]) \ra \Br(K). \]
It can be defined in three different ways (and much of its utility comes from passage between the various definitions),
as we now recall (cf. \cite{O'Neil}, \cite{WCI}, \cite{WCII}).
\\ \\
1) For any ample line bundle $L$ on an abelian variety $A_{/K}$,
the functor $\sG_L$ which associates to a $K$-scheme $S$ the group
of all isomorphisms $(x,\psi): L_{/S} \stackrel{\sim}{\ra}
\tau_x^*(L_{/S})$ between $L_{/S}$ and one of its translates is
represented by a $K$-group scheme, Mumford's \textbf{theta group}.
The subgroup of automorphisms of $L$ gives rise to an embedding
$\Gm \hookrightarrow \sG_L$.  The quotient is canonically
isomorphic to $\kappa(L)$, the kernel of the canonical
homomorphism \[\varphi_L: A \ra A^{\vee}, x \mapsto \tau_x^*(L)
\otimes L^{-1}. \]  Here $A$ will be an elliptic curve and $L$ will
be the line bundle associated to the divisor $P[O]$ on $E$; then
$\kappa(L) = E[P]$.

\begin{proposition}
For $n\geq 2$ we have the following commutative diagram of group schemes:
\begin{equation} \label{eq:theta}
\xymatrix{0 \ar[r] & \G_m \ar[r] \ar[d] & \sG_L \ar[r] \ar[d] & E[P] \ar[r] \ar[d] & 0 \\
0 \ar[r] & \G_m \ar[r] & \Gl_P \ar[r] & \Pgl_P \ar[r] & 0 }
\end{equation}
\end{proposition}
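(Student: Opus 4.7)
The plan is to construct all three vertical arrows by letting the theta group act on global sections of $L$. Since $L = \mathcal{O}_E(P[O])$ is ample of degree $P = n \geq 2$, Riemann--Roch gives $\dim_K H^0(E, L) = P$. Working functorially, for any $K$-scheme $S$ with structure map $\pi_S : E_S \to S$, the pushforward $\pi_{S*} L_S$ is locally free of rank $P$, and a choice of trivialization identifies it with $\mathcal{O}_S^{\oplus P}$.

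First I would define the middle arrow $\sG_L \to \Gl_P$. On $S$-points, an element $(x, \psi)$ of $\sG_L(S)$ (so $x \in E[P](S)$ and $\psi : L_S \xrightarrow{\sim} \tau_x^* L_S$) induces an $\mathcal{O}_S$-linear automorphism of $H^0(E_S, L_S)$ via the composition
\[ H^0(E_S, L_S) \xrightarrow{\psi_*} H^0(E_S, \tau_x^* L_S) \xrightarrow{(\tau_x^*)^{-1}} H^0(E_S, L_S), \]
where the second arrow inverts the pullback isomorphism induced by the translation automorphism $\tau_x$ of $E_S$. After the chosen trivialization, this yields an $S$-point of $\Gl_P$.

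Next, I restrict to the subgroup $\Gm \hookrightarrow \sG_L$, which on $S$-points consists of pairs $(0, \lambda \cdot \Id_L)$ with $\lambda \in \Gm(S)$. The induced automorphism of $H^0(E_S, L_S)$ is scalar multiplication by $\lambda$, i.e.\ the matrix $\lambda \cdot I_P$. Hence the left vertical arrow is the identity $\Gm \to \Gm$, and the left square commutes. Since $\Pgl_P$ is by construction the cokernel of $\Gm \hookrightarrow \Gl_P$, the universal property of cokernels produces a unique right vertical arrow $E[P] \to \Pgl_P$ making the right square commute.

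The step requiring real care --- and the only substantive one --- is verifying that the middle arrow is actually a homomorphism of group schemes. Using the multiplication law in $\sG_L$, which composes the isomorphisms $\psi_1, \psi_2$ after pulling back along the appropriate translations, together with the natural identification $\tau_{x_1 + x_2}^* \cong \tau_{x_1}^* \tau_{x_2}^*$, one checks that the induced action on sections respects composition. This is essentially Mumford's construction of the Heisenberg representation of the theta group on $\pi_* L$; the main obstacle is the bookkeeping of the functorial pullback identifications rather than any genuinely deep input.
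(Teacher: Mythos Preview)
Your proposal is correct and follows essentially the same approach as the paper: the paper simply cites O'Neil's Proposition 2.1 and sketches the right vertical map as the action of $E[P]$ on global sections of $\mathcal{L}(P[O])$, which is precisely the Heisenberg representation you construct in detail. Your version is more explicit about the middle arrow and the verification that it is a homomorphism, but the underlying idea is identical.
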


\begin{proof}
This is Proposition 2.1 in \cite{O'Neil}. For our purposes, we
will only need to know the vertical map on the right.  
We view $E[P]$ as an automorphism group for diagrams $E\rightarrow
\Pro^{P-1}$ --- that is, an element of $E[P]$ acts on the global
sections of the line bundle $\mathcal{L}(P[O])$, and thus induces
an automorphism of $\Pro^{P-1}$. This gives an element of $PGL_P$
as required.
\end{proof}
\noindent
The machinery of nonabelian Galois cohomology \cite{CL} supplies a
connecting map from $\Hp^1(K,E[P]) \ra \Hp^2(K,\Gm)$. After
identifying the latter with $\Br(K)$, this gives our first
definition of $\Delta_P$.
\\ \\
2) On any nonsingular, complete, geometrically integral variety $V_{/K}$ there is an
exact sequence (e.g. \cite[$\S 9.1$]{BLR})

\begin{equation}
\label{EQONE}
0 \ra \Pic(V) \ra \FPic(V)(K) \stackrel{\delta_V}{\ra}
\Br(K) \stackrel{\gamma}{\ra} \Br(V). \end{equation} 
In particular, given a $K$-rational divisor class $D$ on $V$, the
obstruction to $V$ being represented by a $K$-rational divisor is
an element of $\Br(K)$.  A Galois descent argument (e.g. \cite[Prop. 28]{WCII}) shows that
$\Hp^1(K,E[P])$ classifies pairs $(C,D)$---where $C \in
\Hp^1(K,E)$ and $D \in \FPic^P(C)(K)$ is a $K$-rational divisor
class---modulo the relation $(C,D) \sim (C',D')$ if there exists
an isomorphism of torsors $f: C \ra C'$ with $f^*D' = D$.  One may
then define
\[\Delta_P((C,D)) = \delta_C(D). \]
3) On the other hand, $\Hp^1(K,E[P])$ classifies $K$-morphisms $\varphi: C \ra V$, where $C \in \Hp^1(K,E)$ and $V$ is a twisted form of $\PP^{P-1}$.  We may
then define $\Delta_P(\varphi: C \ra V)$ as the class of $V$ in $\Br(K)$.
\\ \\
It follows from 3) that $\Delta_P(\Hp^1(K,E[P]))$ consists of
elements of $\Br(K)$ whose \emph{index} divides $P$; \emph{a
fortiori} we have the important relation
\[\Delta_P(\Hp^1(K,E[P])) \subset \Br(K)[P]. \]

\subsection{Lichtenbaum-Tate Duality} \textbf{} \\ \\ \noindent
As above, we let $E$ be an elliptic curve defined over an arbitrary field $K$, and now let 
$n$ denote a positive integer indivisible by the characteristic of $K$.\footnote{Thus $n$ satisfies exactly the same 
requirements as our ``fixed''' positive integer $P$.  The merit of considering both ``fixed $P$'' and ``variable $n$'' 
will become clear in the next section.}  We have the \textbf{Kummer sequence}
\begin{equation}
\label{KUMMEREQ}
0 \ra E(K)/nE(K) \stackrel{\iota}{\ra} H^1(K,E[n]) \ra H^1(K,E)[n] \ra 0. 
\end{equation}
Using $\iota$ and $\Delta$, we define a map
$\Li: \Hp^1(K,E[n]) \times E(K) \ra \Br(K)$,
\[\Li(\xi,x) = \Delta(\xi+\iota(x)) - \Delta(\xi) -
\Delta(\iota(x)). \] 
Since $\Delta(\iota(E(K)/nE(K))) = 0$, $\Li$ depends only on the image of $\xi$ in $\Hp^1(K,E)[n]$ and
on the image of $x$ in $E(K)/nE(K)$, i.e., it descends to give a map

\begin{equation} \label{eq:obtate}
\Li: \Hp^1(K,E)[n] \times E(K)/nE(K) \ra \Br(K)[n].
\end{equation}

\begin{theorem}(Lichtenbaum)
The map $\Li$ coincides with the Tate pairing $T$.
\end{theorem}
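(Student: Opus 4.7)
The plan is to recognize both $\Li$ and the Tate--Lichtenbaum pairing $T$ as the map
\[
(\eta, x) \mapsto e_{n,*}\bigl(\tilde\eta \cup \iota(x)\bigr),
\]
where $\tilde\eta \in \Hp^1(K,E[n])$ is any lift of $\eta$ via the Kummer sequence (\ref{KUMMEREQ}) and $e_n: E[n] \otimes E[n] \to \mu_n \subset \Gm$ is the Weil pairing. For $T$ this is the standard cohomological formula: the cup product $\tilde\eta \cup \iota(x)$ lies in $\Hp^2(K, E[n] \otimes E[n])$, is pushed forward along $e_n$ into $\Hp^2(K,\mu_n) \subset \Br(K)[n]$, and the result is independent of the chosen lift $\tilde\eta$ thanks to the alternating property of $e_n$.

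To bring $\Li$ into the same form, I will use definition~(1) of $\Delta_n$ from Subsection~\ref{3ASPECTS}, namely $\Delta_n$ as the connecting map attached to the theta group central extension
\[
0 \to \Gm \to \sG_L \to E[n] \to 0.
\]
The crucial input is the classical theorem of Mumford that the commutator form of this central extension, restricted to $E[n]\otimes E[n]$, is precisely the Weil pairing $e_n$.

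The main computation is then a general identity in Galois cohomology: whenever
\[
0 \to A \to G \to B \to 0
\]
is a central extension of $K$-group schemes with $A, B$ commutative and commutator form $c \colon B \otimes B \to A$, the associated connecting map $\partial \colon \Hp^1(K,B) \to \Hp^2(K,A)$ satisfies
\[
\partial(\alpha+\beta) - \partial(\alpha) - \partial(\beta) \;=\; c_*(\alpha \cup \beta)
\]
for all $\alpha, \beta \in \Hp^1(K,B)$. The verification is a direct cocycle calculation: lift cocycles $\alpha, \beta$ to cochains $\tilde\alpha, \tilde\beta$ in $G$, take the product $\tilde\alpha \tilde\beta$ as a lift of $\alpha+\beta$, and recognize that the defect cocycle comparing $d(\tilde\alpha\tilde\beta)$ with $d\tilde\alpha + d\tilde\beta$ is the commutator $[\tilde\beta(\sigma), {}^\sigma\tilde\alpha(\tau)]$, which (because $A$ is central) depends only on the images in $B$ and therefore represents the pushforward $c_*(\alpha \cup \beta)$. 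Applying this identity with $\alpha = \xi$ any lift of $\eta$ and $\beta = \iota(x)$ yields
\[
\Li(\xi, x) \;=\; \Delta_n(\xi + \iota(x)) - \Delta_n(\xi) - \Delta_n(\iota(x)) \;=\; e_{n,*}\bigl(\xi \cup \iota(x)\bigr) \;=\; T(\eta, x),
\]
and the well-definedness of $\Li$ on the quotient $\Hp^1(K,E)[n] \times E(K)/nE(K)$ already observed in the text matches the well-definedness of $T$ there.

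The main obstacle is the identification of the commutator form of $\sG_L$ with the Weil pairing $e_n$. This is classical but deserves care: using the description of the theta group as pairs $(x,\psi)$ with $x \in E[n]$ and $\psi \colon L \xrightarrow{\sim} \tau_x^* L$, the commutator $(x,\psi)(y,\phi)(x,\psi)^{-1}(y,\phi)^{-1}$ lies in $\Aut(L) = \Gm$ and unwinds to $e_n(x,y)$ via the translation-of-line-bundle construction of the Weil pairing. I would either quote Mumford directly or reproduce this short calculation; modulo that input, the rest of the proof is pure formalism of cup products and central extensions.
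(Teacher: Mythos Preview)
The paper does not actually prove this theorem: it is stated with the attribution ``(Lichtenbaum)'' and immediately followed by a discussion of its consequences, with the footnote pointing to \cite{Lichtenbaum2}. So there is no proof in the paper against which to compare your proposal.

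That said, your argument is correct and is essentially the standard modern way to see this identification. The key ingredients --- that the commutator pairing on Mumford's theta group $\sG_L$ is the Weil pairing $e_n$, and that for any central extension the associated connecting map $\partial$ satisfies $\partial(\alpha+\beta)-\partial(\alpha)-\partial(\beta) = c_*(\text{cup product})$ --- are exactly right. One small point of care: your cocycle computation actually produces the commutator $[\tilde\beta(\sigma),{}^{\sigma}\tilde\alpha(\tau)]$, which is $c_*(\beta\cup\alpha)$ rather than $c_*(\alpha\cup\beta)$ under the usual conventions; since $e_n$ is alternating this only costs a sign, but it is worth stating the convention explicitly. You should also note that the identification of $T$ with the Weil-pairing cup product is itself a theorem (it is one of the equivalent descriptions of the Tate pairing, but not its original definition), and cite it. With those two remarks your proof is complete; it is in fact more than the paper supplies.
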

\noindent This has two immediate, and important, consequences.
First, since $T$ is bilinear, so is $\Li$, and this means (by
definition) that $\Delta$ itself is a quadratic map.  Secondly, if $K$ is
complete, discretely valued, and with finite residue field, then
$\Br(K)[n] = (\frac{1}{n}\Z)/\Z$, and $\Li$ puts the finite abelian
groups $\Hp^1(K,E)[n]$ and $E(K)/PE(n)$ in Pontrjagin
duality.\footnote{The equality of period and index in this context
follows almost immediately~\cite{Lichtenbaum2}.}

\subsection{Theta functoriality} \textbf{} \\ \\ \noindent
Let $\eta$ be a class in $H^1(K,E)[n]$.  By a \textbf{Kummer lift} of $\eta$ we mean a 
class $\xi \in H^1(K,E[n])$ whose image under the canonical map $H^1(K,E[n]) \ra H^1(K,E)[n]$ 
is $\eta$.  Of course, the exactness of the Kummer sequence (\ref{KUMMEREQ}) means that $\eta$ 
has at least one Kummer lift.  Following O'Neil and Clark, we attempt to use the obstruction maps 
$\Delta$ to study the discrepancy between the period and the index of $\eta$.
\\ \\ \noindent
However, in \cite{WCI} we only considered the case where $n$ is equal to the period $P$ of $\eta$.  But certainly we can 
also choose Kummer lifts $\xi_n \in H^1(K,E[n])$
whenever $n$ is any multiple of the period of $\eta$, and it turns
out to be quite useful to do so, and in particular to compare
various obstruction maps $\Delta_n$ of differing levels.
Geometrically speaking this amounts to considering along with the
theta group $\sG_L$ of our fixed line bundle $L = L(P[O])$ the
theta groups of all tensor powers $L^{n}$ of $L$ and various
natural homomorphisms between them.  The study of such
homomorphisms is indeed an integral part of Mumford's theory.
\\ \\
So let $m$ be yet another positive integer indivisible by the characteristic
of $K$. The natural inclusion $E[P] \hookrightarrow E[mP]$ of
$\mathfrak{g}_K$-modules induces a map \[j_m: \Hp^1(K,E[P]) \ra
\Hp^1(K,E[mP]). \]  Under the interpretation (2) of
$\Hp^1(K,E[N])$ as equivalence classes of pairs $(C,D)$, where $C
\in H^1(K,E)$ and $D \in \FPic^N(C)$, $j_m$ is the map $(C,D) \mapsto (C,mD)$. \\
Similarly, multiplication by $m$ induces a map
\[[m]: \Hp^1(K,E[mP]) \to \Hp^1(K,E[P]). \]
\begin{proposition}
\label{SHARPROP} If $\xi \in \Hp^1(K,E[P])$ and $\eta\in
\Hp^1(K,E[mP])$, then: \\
a) $\Delta_{mP} j(\xi) = m\Delta_P(\xi)$, and \\
b) $m\Ob_{mP} \eta = \Ob_P([m]\eta)$.
\end{proposition}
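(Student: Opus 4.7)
The plan is to deduce both identities from the naturality of the connecting homomorphism $\Hp^1(K,-) \to \Hp^2(K, \Gm) = \Br(K)$, applied to suitable homomorphisms of theta groups. This matches the ``theta functoriality'' theme of the section.

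For part (a), I would define a $K$-group scheme homomorphism $\tau_m \colon \sG_L \to \sG_{L^m}$ by $(x, \psi) \mapsto (x, \psi^{\otimes m})$, where $E[P] \hookrightarrow E[mP]$. This fits into the commutative diagram of central extensions
\[
\xymatrix{
0 \ar[r] & \Gm \ar[r] \ar[d]_{[m]} & \sG_L \ar[r] \ar[d]_{\tau_m} & E[P] \ar[r] \ar[d]^{\subset} & 0 \\
0 \ar[r] & \Gm \ar[r] & \sG_{L^m} \ar[r] & E[mP] \ar[r] & 0,
}
\]
the left vertical being $[m]$ because scaling $\psi$ by $\lambda$ scales $\psi^{\otimes m}$ by $\lambda^m$. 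The right-hand square induces $j_m$ on $\Hp^1$, and naturality of the boundary map yields $\Delta_{mP}(j_m \xi) = m \Delta_P(\xi)$. One can also read this directly from interpretation 2: $j_m(C, D) = (C, mD)$ by hypothesis and $\delta_C$ in (\ref{EQONE}) is a group homomorphism, so $\Delta_{mP}(C,mD) = m \delta_C(D) = m \Delta_P(C,D)$.

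For part (b), I would construct a ``dual'' $K$-group scheme homomorphism $\sigma_m \colon \sG_{L^m} \to \sG_L$ covering $[m] \colon E[mP] \to E[P]$ and restricting to $[m] \colon \Gm \to \Gm$ on the kernels. By Mumford's classification, central $\Gm$-extensions of a finite commutative group scheme are determined by their commutator pairing. The commutators of $\sG_L$ and $\sG_{L^m}$ are the Weil pairings $e_P$ and $e_{mP}$, respectively, and the standard compatibility
\[
e_P([m]y_1, [m]y_2) = e_{mP}([m]y_1, y_2) = e_{mP}(y_1, y_2)^m
\]
shows that the pullback $[m]^* \sG_L$ and the pushout $[m]_* \sG_{L^m}$ both have commutator $e_{mP}^m$ on $E[mP]$, hence are canonically isomorphic. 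This supplies $\sigma_m$ fitting into
\[
\xymatrix{
0 \ar[r] & \Gm \ar[r] \ar[d]_{[m]} & \sG_{L^m} \ar[r] \ar[d]_{\sigma_m} & E[mP] \ar[r] \ar[d]^{[m]} & 0 \\
0 \ar[r] & \Gm \ar[r] & \sG_{L} \ar[r] & E[P] \ar[r] & 0,
}
\]
and naturality of the boundary map then gives $\Delta_P([m]\eta) = m \Delta_{mP}(\eta)$.

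The main technical obstacle is to promote $\sigma_m$ from an isomorphism class of central extensions to a canonical $K$-group scheme morphism compatible with the Galois action. I expect this to follow from the canonical, intrinsic nature of the theta groups $\sG_L$ and $\sG_{L^m}$ and the Galois-equivariance of the commutator-pairing classification; an alternative would be to write $\sigma_m$ down concretely via the isogeny $[m] \colon E \to E$ and the relation $[m]^* L \cong L^{m^2}$, though that approach may need extra care when $m$ is even.
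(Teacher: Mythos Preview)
Your approach is essentially the same as the paper's: construct morphisms of theta group extensions fitting into commutative ladders with $[m]$ on the $\Gm$-kernels, then invoke naturality of the connecting map in nonabelian cohomology. For part (a) your map $\tau_m = (x,\psi) \mapsto (x,\psi^{\otimes m})$ is exactly the paper's $\epsilon_m$, and your alternative via interpretation (2) is a nice bonus.

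For part (b), however, you are working harder than necessary and the extra work has a genuine gap. The paper simply cites Mumford \cite[p.~309--310]{Mumford}, who explicitly constructs the map $\eta_m\colon \sG_{L^m} \to \sG_L$ as a $K$-group scheme homomorphism; once you have that, the ladder and the cohomological conclusion are immediate. Your route via the commutator pairing does not, by itself, produce a \emph{canonical} morphism: two central $\Gm$-extensions of $E[mP]$ with the same commutator form differ by a class in $\Ext^1(E[mP],\Gm)$ (the ``abelian'' extensions), so matching commutators gives only an isomorphism class, not a distinguished map---and in particular not one automatically defined over $K$ or Galois-equivariant. You correctly flag this as the main obstacle, but rather than resolving it via $[m]^*L \cong L^{m^2}$ (which, as you note, has its own parity issues), the cleanest fix is simply to use Mumford's explicit $\eta_m$.
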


\begin{proof}
Mumford shows \cite[p. 309--310]{Mumford} that both $j$ and $[m]$
extend to morphisms of the theta group sequences: \\ \\

\[\xymatrix{0 \ar[r] & \G_m \ar[d]^{[m]} \ar[r] & \sG_L \ar[d]^{\epsilon_m} \ar[r] & E[P] \ar[r] \ar[d]^j & 0 \\
0 \ar[r] & \G_m \ar[r] & \sG_{L^m} \ar[r] & E[mP] \ar[r] & 0} \]

and

\[ \xymatrix{0 \ar[r] & \G_m \ar[d]^{[m]} \ar[r] & \sG_{L^m} \ar[d]^{\eta_m} \ar[r] & E[mP] \ar[r] \ar[d]^{[m]} & 0 \\
0 \ar[r] & \G_m \ar[r] & \sG_{L} \ar[r] & E[P] \ar[r] & 0}.\]
In each case the restriction to $\Gm$ is simply the $m$th power
map.  We remark that the map $\epsilon_m: \sG_L \ra
\sG_{L^m}$ is relatively straightforward to define: an isomorphism
$\psi: L \stackrel{\sim}{\ra} \tau_x^*L$ induces, by passage to
the $m$th power, a canonical isomorphism $\psi^{\otimes m}: L^m
\stackrel{\sim}{\ra} \tau_x^*(L^m)$, so $\epsilon_m: (x,\psi)
\mapsto (x,\psi^m)$.  These commutative ladders induce commutative ladders in nonabelian
Galois cohomology, and the commutativity of these last two
diagrams gives the desired result.

\end{proof}

\subsection{Applications to the quantity $I/P$} \textbf{} \\ \\
\noindent We begin with the following result, which was known to
O'Neil:

\begin{proposition}(\cite[Theorem 5]{WCI})
\label{EASYPROP} Let $E_{/K}$ be an elliptic curve over a field
$K$, and $P$ a positive integer indivisible by the characteristic
of $K$. Let $\eta \in \Hp^1(K,E)$ be of period $P$.  The following
are equivalent: \\
a) $\eta$ has index $P$. \\
b) There exists some Kummer lift $\xi$ of $\eta$ such that $\Delta_P(\xi) = 0$.
\end{proposition}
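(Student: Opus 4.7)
The plan is to prove the equivalence by invoking interpretation (2) of the period-index obstruction map from Section \ref{3ASPECTS}, which relates $\Delta_P$ directly to the failure of a rational divisor class to be represented by a rational divisor. Under that interpretation, $H^1(K,E[P])$ classifies equivalence classes of pairs $(C,D)$ with $C$ a torsor under $E$ and $D \in \FPic^P(C)(K)$, and $\Delta_P((C,D)) = \delta_C(D)$, where $\delta_C$ is the connecting map in the exact sequence (\ref{EQONE}).

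First I would identify the Kummer lifts of $\eta$ in this language. The map $H^1(K,E[P]) \to H^1(K,E)[P]$ of (\ref{KUMMEREQ}) sends $(C,D)$ to the class of the underlying torsor $C$, so the Kummer lifts of $\eta$ are exactly the pairs $(C_\eta, D)$ with $C_\eta$ the torsor corresponding to $\eta$ and $D$ running over $\FPic^P(C_\eta)(K)$. The assumption that $\eta$ has period $P$ guarantees that $\FPic^P(C_\eta)(K)$ is non-empty, so Kummer lifts exist (as already implied by the Kummer sequence), and distinct lifts correspond to distinct rational divisor classes of degree $P$ on $C_\eta$.

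Next I would combine this with the exact sequence
\[ 0 \to \Pic(C_\eta) \to \FPic(C_\eta)(K) \xrightarrow{\delta_{C_\eta}} \Br(K) \to \Br(C_\eta). \]
A lift $\xi = (C_\eta,D)$ satisfies $\Delta_P(\xi) = \delta_{C_\eta}(D) = 0$ if and only if $D$ lies in the image of $\Pic(C_\eta) \to \FPic(C_\eta)(K)$, i.e., if and only if $D$ is represented by a genuine $K$-rational divisor of degree $P$ on $C_\eta$.

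Finally I would connect this to the index via the description from the introduction: the index of $\eta$ equals the least positive degree of a $K$-rational divisor on $C_\eta$, and always satisfies $P \mid I \mid P^2$ since the period divides the index. Thus, (b) $\Rightarrow$ (a): a vanishing $\Delta_P$ lift produces a $K$-rational divisor of degree $P$, forcing $I \mid P$, and combined with $P \mid I$ gives $I = P$. Conversely, (a) $\Rightarrow$ (b): if $I = P$, choose a $K$-rational divisor $\mathcal{D}$ of degree $P$, let $D = [\mathcal{D}] \in \FPic^P(C_\eta)(K)$, and take $\xi$ to be the corresponding pair $(C_\eta,D)$; then $\delta_{C_\eta}(D) = 0$ by construction. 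The argument is essentially a direct unwinding of the definitions, and the only real point to verify is the compatibility between the parametrization of $H^1(K,E[P])$ by such pairs and the Kummer map, which is recorded in the descent reference \cite[Prop. 28]{WCII} cited earlier; there is no substantive obstacle.
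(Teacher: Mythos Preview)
Your proposal is correct and follows exactly the paper's approach: the paper's proof is the single sentence ``in light of the second definition of $\Delta_P$, both conditions express the fact that $C$ admits a rational divisor of degree $P$,'' and your write-up is simply a careful unpacking of that assertion via the exact sequence (\ref{EQONE}) and the pair interpretation of $H^1(K,E[P])$.
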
 

\begin{proof}

Indeed, in light of the second definition of $\Delta_P$, both
conditions express the fact that $C$ admits a rational divisor of
degree $P$.

\end{proof}

\noindent We are therefore interested in the remaining case in
which $\Delta_P(\xi) \neq 0$ for every Kummer lift $\xi$ of
$\eta$.
\\ \\
Let $C_{/K}$ be a curve of any genus, of period $P$ and index $I$.
Referring back to (\ref{EQONE}), we may define the \textbf{relative 
Brauer group} $\kappa(C/K) = \Im(\delta) = \Ker(\gamma)$. For any $n \in
\Z$, define moreover $\kappa^n(C/K) = \delta_C(\FPic^n(C)(K))$.
\begin{proposition}
The quotient $\kappa(C/K)/\kappa^0(C/K)$ is cyclic of order $I/P$.
\end{proposition}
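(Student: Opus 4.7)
The plan is to read off both $\kappa(C/K)$ and $\kappa^0(C/K)$ as quotients of subgroups of $\FPic(C)(K)$ by $\Pic(C)$, and then translate everything into the degree map.

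From the exact sequence (\ref{EQONE}), $\delta$ induces an isomorphism $\kappa(C/K) \cong \FPic(C)(K)/\Pic(C)$. By its very definition, $\kappa^0(C/K) = \delta(\FPic^0(C)(K))$ is the image in this quotient of the subgroup $\FPic^0(C)(K) \subseteq \FPic(C)(K)$, so
$$\kappa^0(C/K) \cong \bigl(\FPic^0(C)(K) + \Pic(C)\bigr)/\Pic(C).$$
The third isomorphism theorem therefore yields
$$\kappa(C/K)/\kappa^0(C/K) \;\cong\; \FPic(C)(K)/\bigl(\FPic^0(C)(K) + \Pic(C)\bigr).$$

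Next, I would bring in the degree map $\deg : \FPic(C)(K) \to \Z$, whose kernel is $\FPic^0(C)(K)$ by definition. By the definition of the period $P$ (the least positive degree of a $K$-rational divisor \emph{class}), the image of $\deg$ on $\FPic(C)(K)$ is exactly $P\Z$; by the definition of the index $I$ (the least positive degree of a $K$-rational divisor), $\deg(\Pic(C)) = I\Z$.

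The key observation is then that $\FPic^0(C)(K) + \Pic(C)$ coincides with $\deg^{-1}(I\Z)$ inside $\FPic(C)(K)$: indeed, given $D \in \FPic(C)(K)$ with $\deg D \in I\Z$, we may choose $D' \in \Pic(C)$ with $\deg D' = \deg D$, whence $D - D' \in \FPic^0(C)(K)$ and $D = (D-D') + D'$. Passing to the quotient by this subgroup identifies the cokernel with $P\Z/I\Z \cong \Z/(I/P)\Z$, which is cyclic of order $I/P$. The step most worth care is the identification of $\FPic^0(C)(K) + \Pic(C)$ with the preimage of $I\Z$, but this is routine once the setup is in place, so no genuine obstacle is expected.
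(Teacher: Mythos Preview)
Your proof is correct. It is, however, a different argument from the paper's. The paper explicitly chooses a degree-$P$ rational divisor class $D$, sets $\alpha = \delta_C(D)$, and shows directly that $\kappa^{nP}(C/K) = n\alpha + \kappa^0(C/K)$; then $I/P$ emerges as the order of the coset $\alpha + \kappa^0(C/K)$, since $n\alpha \in \kappa^0(C/K)$ if and only if $C$ has a rational divisor of degree $nP$. Your approach instead works structurally: you pull everything back along $\delta$ to $\FPic(C)(K)$, then push forward along $\deg$ to $\Z$, reducing the question to the index computation $P\Z/I\Z$. This is closer in spirit to the ``standard snake lemma argument'' the paper alludes to just before its proof. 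What the paper's more hands-on argument buys is an explicit generator $\alpha$ for the cyclic quotient, which is useful immediately afterward when discussing the splitting of the sequence $0 \to \kappa^0(C/K) \to \kappa(C/K) \to Q \to 0$ and its relation to the inequality~(\ref{FUNDINEQ}).
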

\noindent
This is a reasonably well-known result -- c.f. \cite[Thm. 2.1.1]{CK}, \cite[Prop. 24]{WCII}
-- the standard proof of which employs a snake lemma argument.  But the following proof offers
some additional insight.

\begin{proof}
By definition of $P$ we have $\FPic^n(C)(K) = \emptyset$ unless
$n$ is a multiple of $P$, so \[\kappa(C/K) = \delta_C(\FPic(C)(K)) =
\delta_C(\bigcup_{n \in \Z} \FPic^{nP}(C)(K)) \]
\[= \bigcup_{n \in \Z} \delta(\FPic^{nP}(C)(K)) = \bigcup_{n \in \Z}
\kappa^{nP}(C/K).\]
Choose a rational divisor class $D$ of degree $P$; this in turn
determines a rational divisor class of each degree $nP$, namely
$D_{nP} = nD$. Put $\alpha = \delta_C(D)$, so that $\delta_C(D_{nP}) =
n\alpha$. Adding $D_{nP}$ induces a bijection of sets
$\FPic^0(C)(K) \ra \FPic^{nP}(C)$, and exhibits
\[\kappa^{nP}(C/K) = n\alpha + \kappa^0(C/K) \]
as a coset of the subgroup $\kappa^0(C/K)$ of $\Br(K)$.  This
shows that $\kappa(C/K)$ is the subgroup generated by $\alpha$ and
$\kappa^0(C/K)$.  Moreover, $C$ admits a rational divisor of
degree $nP$ if and only if $0 \in \kappa^{nP}(C/K)$ if and onlf if $n\alpha \in
\kappa^0(C/K)$.  The quantity $I/P$ is the least such value of
$n$, i.e., the order of

\[ \langle \alpha + \kappa^0(C/K) \rangle / \kappa^0(C/K) =
\kappa(C/K)/\kappa^0(C/K). \]

\end{proof}

\begin{proposition}
Let $\eta \in H^1(K,E)$ be a class with period $P$ and index $I$,
and let $\xi$ be any Kummer lift of $\eta$. Then
\begin{equation} \label{FUNDINEQ}
I/P \leq \min_{x \in E(K)/PE(K)} \# \Delta_P(\xi+x).
\end{equation}
\end{proposition}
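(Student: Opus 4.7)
The plan is to use description (2) of $\Delta_P$ together with the previous proposition identifying $I/P$ with the order of a certain coset in the relative Brauer group. Let $(C,\iota_0)$ be the torsor corresponding to $\eta$. Under interpretation (2), a Kummer lift $\xi \in \Hp^1(K, E[P])$ of $\eta$ is represented by a pair $(C, D)$ with $D \in \FPic^P(C)(K)$ a $K$-rational divisor class of degree $P$, and by definition $\Delta_P(\xi) = \delta_C(D) =: \alpha \in \kappa(C/K) \subset \Br(K)$. The previous proposition then tells us that the order of $\alpha + \kappa^0(C/K)$ in the quotient $\kappa(C/K)/\kappa^0(C/K)$ is exactly $I/P$.

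Next I would show that as we vary the Kummer lift, the value $\Delta_P(\xi + \iota(x))$ remains inside the single coset $\alpha + \kappa^0(C/K)$. Indeed, by exactness of the Kummer sequence \eqref{KUMMEREQ}, any two Kummer lifts of $\eta$ differ by an element of the form $\iota(x)$ with $x \in E(K)/PE(K)$. Under the bijection of interpretation (2), the class $\iota(x)$ corresponds to the trivial torsor $E$ equipped with the degree-$P$ divisor class $[x] + (P-1)[O]$, so the effect of adding $\iota(x)$ to $\xi$ is to shift $D$ by a $K$-rational degree-zero divisor class on $C$, namely the image of $x$ under the isomorphism $E \cong \FPic^0(C)$ supplied by $\iota_0$. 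Applying $\delta_C$ and using additivity (i.e.\ the fact that $\delta_C$ is a group homomorphism on $\FPic(C)(K)$, which is immediate from \eqref{EQONE}) gives
\begin{equation*}
\Delta_P(\xi + \iota(x)) = \alpha + \delta_C(D_x) \in \alpha + \kappa^0(C/K),
\end{equation*}
where $D_x \in \FPic^0(C)(K)$ is the translate determined by $x$.

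To finish, I would observe that for any $\beta \in \alpha + \kappa^0(C/K)$, the order of $\beta$ in $\Br(K)$ is at least the order of its image in the quotient $\Br(K)/\kappa^0(C/K)$, which by the previous proposition is $I/P$. Applying this to $\beta = \Delta_P(\xi + \iota(x))$ for each $x$ and taking the minimum yields the desired inequality $I/P \leq \min_{x} \#\Delta_P(\xi + x)$.

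The one subtlety — and the place I would have to be most careful — is verifying the geometric description of the Kummer map under interpretation (2), i.e.\ that adding $\iota(x)$ to a Kummer lift of $\eta$ corresponds precisely to translating the chosen degree-$P$ divisor class $D$ by a specific element of $\FPic^0(C)(K)$ coming from $E(K)$. This is essentially the content of the Galois descent computation identifying pairs $(C,D)$ with classes in $\Hp^1(K, E[P])$, but pinning down the signs and the compatibility with $\iota_0$ is the only non-formal step in the argument; once it is in hand, the conclusion is a one-line consequence of the previous proposition.
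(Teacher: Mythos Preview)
Your argument is correct, and in fact the ``subtlety'' you flag is not really needed: you do not have to pin down exactly which degree-zero class $D_x$ arises from $\iota(x)$.  All you use is that $\xi+\iota(x)$ is again a Kummer lift of $\eta$, hence under interpretation (2) corresponds to some pair $(C,D')$ with $D' \in \FPic^P(C)(K)$; then $\Delta_P(\xi+\iota(x)) = \delta_C(D') \in \kappa^P(C/K) = \alpha + \kappa^0(C/K)$ automatically.  So the argument runs: every Kummer lift has $\Delta_P$-value in the coset $\alpha + \kappa^0(C/K)$, and the order of any such element is at least the order of the coset in the quotient, which is $I/P$.

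This is a genuinely different route from the paper's.  The paper does not invoke the previous proposition on $\kappa(C/K)/\kappa^0(C/K)$ at all; instead it appeals to theta functoriality (Proposition~\ref{SHARPROP}): if $D = \#\Delta_P(\xi)$ then $\Delta_{PD}(j_D(\xi)) = D\cdot\Delta_P(\xi) = 0$, so by interpretation (2) there is a rational divisor of degree $PD$ on $C$, whence $I \leq PD$.  The paper's argument is shorter and more self-contained, but your approach has the virtue of making transparent the connection between the inequality \eqref{FUNDINEQ} and the short exact sequence \eqref{ISITSPLIT}: equality holds precisely when some element of the coset $\alpha + \kappa^0(C/K)$ has order exactly $I/P$, i.e.\ when the sequence splits.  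The paper notes this equivalence just after the proof but does not derive it; your argument essentially contains it.
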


\noindent





%











\begin{proof} As $x$ runs through $E(K)/PE(K)$, the elements $\xi + x$
run through all Kummer lifts of $\eta$.  For any Kummer lift
$\xi$, let $D = \#  \Delta_P(\xi) $.  Then $\Delta_{PD}(i(\xi)) =
D \Delta_P(\xi) = 0$, so that there is a rational divisor of
degree $PD$ on the corresponding torsor, and $I \leq PD$.
\end{proof}
\noindent
Concerning the inequality (\ref{FUNDINEQ}), Proposition
\ref{EASYPROP} asserts that the left hand side equals $1$ if and onlf if the
right hand side equals $1$.  When $P = p$ is prime, we have a
simple dichotomy: either $I/P = 1$ or $I/P = p$, so equality holds
in (\ref{FUNDINEQ}) when the period is prime, a fact which was
exploited in \cite{WCI}.  By a primary decomposition argument, we
also have equality when $P$ is squarefree.  It is not hard to see
that equality holding in (\ref{FUNDINEQ}) is equivalent to the
\emph{splitting} of the short exact sequence

\begin{equation}
\label{ISITSPLIT} 0 \ra \kappa^0(C/K) \ra \kappa(C/K) \ra Q \ra 0,
\end{equation}
where the last term $Q$ is cyclic of order $\frac{I}{P}$.  It is
natural to wonder whether this sequence \emph{always} splits. This
innocuous-looking question lies at the heart of the relationship
between the period, the index and the period-index obstruction
map, and it turns out to be surprisingly difficult.  We are
inclined to believe that the answer is in general negative.
However it is possible to show that equality holds for certain
specially constructed classes. In the proofs of the main theorems
we use Lichtenbaum-Tate duality to ensure equality, following
\cite{Sharif}. 

\subsection{The case of full level $P$ structure} \textbf{} \\ \\ \noindent
In this section we assume that that $E[P](\overline{K}) \subset E(K)$. By the theory of
the Weil pairing, the $P$th roots of unity $\mu_P$ are contained
in $K$. Fix a basis $(S,T)$ for $E[P]$ once and for all.  Note that this induces, via the Weil pairing, 
a basis for $\mu_P$ --- i.e., a specific primitive $P$th root of unity $\zeta = e_P(S,T)$.  After making this 
choice, we get an isomorphism

\begin{equation}
\label{FULLLEVELISO} \Phi: \Hp^1(K,\mu_P)\times\Hp^1(K,\mu_P)
\stackrel{\sim}{\ra} \Hp^1(K,E[P]).
\end{equation}
The composition of the cup product with the map $\mu_P\otimes\mu_P\rightarrow\mu_P$ given by
$\zeta^a\otimes\zeta^b\mapsto \zeta^{ab}$ gives a pairing \[\langle \ , \ \rangle_P:
\Hp^1(K,\mu_P)\times\Hp^1(K,\mu_P)\rightarrow\Hp^2(K,\mu_P)=\Br(K)[P], \]
the \textbf{level P norm residue symbol} (or \textbf{Hilbert
symbol}) \cite[p. 207]{CL}.
\\ \\
Via the canonical Kummer isomorphism $H^1(K,\mu_P) =
K^{\times}/K^{\times P}$, we may equally well view $\Phi$ and
$\langle \ , \ \rangle_P$ as maps defined on
$(K^{\times}/K^{\times P})^2$.

\begin{theorem} \label{THM2.7}
If $E[\Pstar] \subset E(K)$, then $ \Ob_P = \langle \ ,
\ \rangle_P$.
\end{theorem}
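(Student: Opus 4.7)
The plan is to work directly with definition (1) of $\Ob_P$: the connecting homomorphism in Galois cohomology arising from the theta group extension
\[0 \to \Gm \to \sG_L \to E[P] \to 0.\]
Under the hypothesis $E[\Pstar] \subset E(K)$, the absolute Galois group $\gk$ acts trivially on $E[P]$, so $\Hp^1(K,E[P]) = \Hom(\gk, E[P])$. For a homomorphism $\xi\colon \gk \to E[P]$, the connecting map sends $\xi$ to the $2$-cocycle $(\sigma, \tau) \mapsto c(\xi_\sigma, \xi_\tau)$, where $c\colon E[P] \times E[P] \to \Gm$ is any set-theoretic $2$-cocycle representing the class of the theta extension.

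The first step is to invoke Mumford's explicit description of $\sG_L$: it is a Heisenberg-type central extension of $E[P]$ by $\Gm$ whose commutator pairing is the Weil pairing $e_P$. In the chosen basis $(S,T)$, one can pick a set-theoretic section $s\colon E[P] \to \sG_L$ for which the resulting cocycle takes Heisenberg normal form
\[c(a_1 S + b_1 T,\ a_2 S + b_2 T) = \zeta^{a_1 b_2},\]
with $\zeta = e_P(S,T)$. Indeed, its antisymmetrization recovers $\zeta^{a_1 b_2 - a_2 b_1} = e_P(e_1, e_2)$ as required, and $c$ takes values in $\mu_P \subset \Gm$ because $E[P]$ is $P$-torsion.

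Next, a class $\xi \in \Hp^1(K, E[P])$ represented under $\Phi$ by a pair $(\alpha, \beta) \in \Hp^1(K,\mu_P)^2$ corresponds via the Kummer identification to the homomorphism $\sigma \mapsto a_\sigma S + b_\sigma T$, where $\alpha_\sigma = \zeta^{a_\sigma}$ and $\beta_\sigma = \zeta^{b_\sigma}$ are representative cocycles. Plugging into the formula above yields
\[\Ob_P(\xi)(\sigma,\tau) = c(\xi_\sigma, \xi_\tau) = \zeta^{a_\sigma b_\tau},\]
which is precisely the value at $(\sigma,\tau)$ of the cup product $\alpha \smile \beta$ followed by the multiplication $\mu_P \otimes \mu_P \to \mu_P$, $\zeta^u \otimes \zeta^v \mapsto \zeta^{uv}$. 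By definition this is the Hilbert symbol $\langle \alpha, \beta \rangle_P$, yielding the asserted identity.

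The main technical obstacle is justifying that the theta cocycle $c$ can be chosen in Heisenberg normal form $K$-rationally --- equivalently, that the section $s$ can be chosen $\gk$-equivariantly --- when $P$ is even. This is precisely where the strengthened hypothesis $E[\Pstar] = E[2P] \subset E(K)$ enters: for even $P$ the natural lift of a point $x \in E[P]$ to $\sG_L$ is defined only up to a sign, and specifying a coherent sign amounts to choosing a $2P$-torsion lift of $x$, available by hypothesis. For odd $P$ the division-by-$2$ map is a canonical automorphism of $E[P]$ and supplies a symmetric $K$-rational section with no extra hypothesis, recovering the prime-power case from \cite{WCI}.
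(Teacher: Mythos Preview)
Your approach---computing $\Ob_P$ directly via an explicit Heisenberg-type cocycle for the theta extension---is genuinely different from the paper's.  The paper never tries to normalize a section; instead it proves $\Ob_P = \langle\,,\,\rangle_P + d$ for a homomorphism $d$ built from the coboundary $\cbd\colon H^0(K,E[P]) \to H^1(K,\mu_P)$ of the \emph{special} theta sequence $0 \to \mu_P \to \sS_L \to E[P] \to 0$, and then shows $d=0$: first $2d=0$ via Mumford's involution $\iota$ on $\sG_L$, and then in the even case $\cbd=0$ via Mumford's map $\eta_2\colon \sG_{L^2}\to\sG_L$ together with the surjectivity of $[2]$ on $H^0(K,E[2P])\to H^0(K,E[P])$ furnished by the hypothesis $E[2P]\subset E(K)$.

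Your route could be made to work, but as written the gap is exactly where you flag ``the main technical obstacle,'' and it is not closed.  The formula $(\sigma,\tau)\mapsto c(\xi_\sigma,\xi_\tau)$ computes the connecting map only when the section $s$ lands in $\sG_L(K)$; you note this, but you do not prove that such a $K$-rational section exists with cocycle \emph{exactly} $\zeta^{a_1 b_2}$.  Concretely, setting $s(aS+bT)=s_S^{a}s_T^{b}$ for $K$-rational lifts $s_S,s_T$, the cocycle picks up factors of $s_S^{P}, s_T^{P}\in K^\times$ from carries in $a$ and $b$; these disappear only if $s_S^{P}, s_T^{P}\in K^{\times P}$, which is precisely the vanishing of the paper's $\cbd$.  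Your justification conflates several things: a lift of $x$ to $\sG_L$ is defined up to $\Gm$, not up to sign; what is defined up to $\mu_2$ for even $P$ is Mumford's \emph{symmetric} lift, but you neither name this notion, nor explain why a symmetric section yields Heisenberg normal form, nor explain why a $2P$-division point of $x$ in $E$ singles out a symmetric lift of $x$ in $\sG_L$.  Those are exactly the facts the paper's two lemmas supply, so at present the hard step is asserted rather than proved.
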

\noindent As a prelude to the proof, we consider the \textbf{special
theta group}.  Recall the theta group scheme $\sG_L$, where $L$ is
the class of $P[O]$. We found a homomorphism from $\sG_L$ to
$\Gl_P$. Let $\sS_L$ be the fiber product $\sG_L \times_K \Sl_P$,
where $\Sl_P \subset \Gl_P$ is the special linear group. Then we
have an exact sequence

\[ 0 \to \mu_P \to \sS_L \to E[P] \to 0, \]
where the maps are the restrictions of the maps in~\eqref{eq:theta}. If we identify $\Hp^2(K,\mu_P)$ with $(\Br K)[P]$, then the coboundary $\Hp^1(K, E[P]) \to \Hp^2(K,\mu_P)$ is the obstruction map. Let $\cbd:\Hp^0(K,E[P])\to \Hp^1(K,\mu_P)$ be the lower dimension coboundary. Define

\[ d:\Hp^1(K, E[P]) \to (\Br K)[P] \]
to be given by $d\xi(\sigma, \tau) = \cbd(\xi(\tau))(\sigma)$. (Note that since $E[P]$ is a trivial Galois module, each 
cohomology class in $\Hp^1(K,E[P])$ consists of a single cocycle.) Then

\begin{lemma}\label{Ob=hilb+d}
$\Ob = \langle \ , \ \rangle + d$.
\end{lemma}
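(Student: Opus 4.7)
The plan is to compute both $\Delta$ and $\langle\ ,\ \rangle + d$ at the level of explicit $2$-cocycles, using a common set-theoretic section of the special theta group extension $0 \to \mu_P \to \sS_L \to E[P] \to 0$. Fix lifts $\tilde S, \tilde T \in \sS_L(\Kb)$ of the basis elements $S, T \in E[P]$. Mumford's theorem identifies the commutator pairing on a theta group with the Weil pairing, so I get a Heisenberg-type relation $\tilde T \tilde S = \zeta\, \tilde S \tilde T$. Define the set-theoretic section $s : E[P] \to \sS_L$ by $s(aS + bT) = \tilde S^{a}\tilde T^{b}$, and let $f : E[P] \times E[P] \to \mu_P$ be its factor set, $f(x,y) := s(x)\,s(y)\,s(x+y)^{-1}$. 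A short application of the commutation relation gives
$$f(aS+bT,\, cS+dT) = \zeta^{bc}.$$

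Now let $\xi \in Z^1(\gk, E[P])$ be a cocycle representing a class in $H^1(K, E[P])$. Because $E[P] \subset E(K)$, the Galois action is trivial, the cocycle relation reduces to $\xi(\sigma\tau) = \xi(\sigma) + \xi(\tau)$, and we may write $\xi(\sigma) = a(\sigma)S + b(\sigma)T$ for cocycles $a, b \in Z^1(\gk, \mu_P)$ under the Kummer identifications packaged into $\Phi$. The obstruction $\Delta(\xi)$ is represented, via the connecting map of the special theta group extension, by
$$\Delta(\xi)(\sigma,\tau) = s(\xi(\sigma))\cdot {}^\sigma\! s(\xi(\tau))\cdot s(\xi(\sigma\tau))^{-1}.$$
Split ${}^\sigma\! s(\xi(\tau)) = s(\xi(\tau))\cdot \epsilon(\sigma,\tau)$, where $\epsilon(\sigma,\tau) := s(\xi(\tau))^{-1}\cdot {}^\sigma\! s(\xi(\tau))$ lies in $\mu_P$, since both factors are lifts of the Galois-fixed element $\xi(\tau) \in E[P]$. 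Because $\mu_P$ is central in $\sS_L$, substituting collapses the computation to
$$\Delta(\xi)(\sigma,\tau) = f(\xi(\sigma),\xi(\tau))\cdot \epsilon(\sigma,\tau).$$

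It then suffices to match the two factors with the two summands of the claim. The Heisenberg formula for $f$ gives first factor $\zeta^{b(\sigma)a(\tau)}$, which is exactly the cup-product cocycle representing $\langle a, b\rangle_P$ on $H^1(K,\mu_P) \times H^1(K,\mu_P)$ (up to sign conventions absorbed into a coboundary and the symmetry of the Hilbert symbol). The second factor is, by construction, the coboundary $\cbd$ of the $\gk$-invariant element $\xi(\tau) \in E[P]$ evaluated at $\sigma$, that is, $\cbd(\xi(\tau))(\sigma) = d\xi(\sigma,\tau)$. Adding the two $2$-cocycles in $H^2(K, \mu_P) = \Br(K)[P]$ yields the claimed identity $\Delta = \langle\ ,\ \rangle + d$.

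The hard part will be the sign and ordering bookkeeping in the Heisenberg step: one must verify that the factor set $f$ produced by the chosen section really represents the Hilbert symbol with the normalization $\zeta = e_P(S,T)$ fixed earlier, rather than its transpose or its inverse, and that the asymmetry of the $d$-term (in which $\sigma$ and $\tau$ play different roles) correctly compensates for the fact that $f$ is not a symmetric function of its arguments. Conceptually, however, the decomposition is transparent: the obstruction map records precisely two independent failures of a lift to sit inside $\sS_L$ --- the non-commutativity of the theta group, measured by the Hilbert symbol, and the failure of a Galois-invariant element of $E[P]$ to admit a Galois-invariant lift to $\sS_L$, measured by $d$.
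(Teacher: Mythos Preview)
Your argument is correct in outline and in its key identifications, but it follows a genuinely different route from the paper's proof.

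The paper does \emph{not} attempt a global cocycle computation. Instead it invokes an earlier result (\cite[Thm.~6]{WCI}) that $\Delta - \langle\ ,\ \rangle$ is a group homomorphism, so it suffices to verify $\Delta - \langle\ ,\ \rangle = d$ on a generating set for $H^1(K,E[P])$. The generators chosen are the ``pure'' classes in the image of $H^1(K,\Z/P\Z)$ under $1 \mapsto S$ (and symmetrically $1 \mapsto T$); on such classes the Hilbert symbol vanishes identically, so one only has to check $\Delta(\xi) = d\xi$. This is then done by a short matrix computation: lift $S$ through $\Pgl_P$ to some $M_S \in \Sl_P(\Kb)$, and read off the $2$-cocycle directly.

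Your approach instead computes $\Delta$ on an arbitrary class by exploiting the Heisenberg structure of the theta group, and obtains both the Hilbert-symbol term and the $d$-term in one pass. The upside is that your argument is self-contained: you do not need the external linearity result from \cite{WCI}, and the conceptual picture (non-commutativity of $\sS_L$ contributes $\langle\ ,\ \rangle$, non-triviality of the Galois action on lifts contributes $d$) is very clean. The downside, which you correctly flag, is the sign and ordering bookkeeping: your factor set gives $\zeta^{b(\sigma)a(\tau)}$, which on the nose is $b \cup a$ rather than $a \cup b$, and the Hilbert symbol is skew-symmetric rather than symmetric, so this is not a matter that can be waved away---it must be reconciled with the precise convention relating Mumford's commutator pairing on $\sG_L$ to $e_P$. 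The paper's reduction to pure classes sidesteps this entirely, since on those classes $\langle\ ,\ \rangle$ is zero and there is nothing to match.
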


\begin{proof}
As mentioned above, we have earlier shown \cite[Thm. 6]{WCI} that
$\Ob-\langle \ , \  \rangle$ is a homomorphism of groups.
Therefore it suffices to prove the claim for any subset of
$\Hp^1(K,E[n])$ which generates the group. We will consider the
subset given by the images of $\Hp^1(K,\Z/n\Z)$ induced by the two
maps $(1\mapsto S)$ and $(1\mapsto T)$. By symmetry, it suffices
to consider the case $(1\mapsto S)$ only.
Let $a\in\Hom(\mathfrak{g}_K, \Z/n\Z)$, and let $\xi$ be the image of $a$
under the map $(1\mapsto S)$. Clearly $\langle\xi\rangle=0$. Map
$S$ down to $PGL_n(K)$, then lift to an element $M_S$ in
$SL_n(\Kb)$. We set $M_{aS}=M_S^a$. Note that since $\det M_S=1$
and $P$ has order $n$, we must have $M_S^n=I$. Then

\begin{align*}
(\Ob \xi)(\sigma, \tau) & = M_S^{a(\sigma)}\sigma M_S^{a(\tau)} M_S^{-a(\sigma \tau)} \\
    & = M_S^{a(\sigma)} a(\tau)\cdot \cbd(S)(\sigma) M_S^{a(\tau)} M_S^{-a(\sigma \tau)} \\
    & =  a(\tau)\cdot \cbd(S)(\sigma) \\
    & = \cbd(\xi(\tau))(\sigma) \\
    & = d\xi(\sigma, \tau)
\end{align*}
The second equality follows from the fact that $\cbd(S)(\sigma)=\sigma M_S M_S^{-1}$.
\end{proof}

\begin{lemma}\label{2d=0}
$2d = 0$.
\end{lemma}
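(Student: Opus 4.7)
The plan is to produce a natural involution of the special theta group sequence that induces $[-1]$ on the quotient $E[P]$ and the identity on the center $\mu_P$. Functoriality of the connecting map $\cbd$ will then force $\cbd(-x) = \cbd(x)$, while $\Z$-linearity of $\cbd$ gives $\cbd(-x) = -\cbd(x)$; together these yield $2\cbd = 0$, and the lemma follows immediately.

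The starting point is the symmetry of $L = \mathcal{L}(P[O])$. Because $[-1]^\ast[O] = [O]$, we have the canonical equality $[-1]^\ast L = L$, so pullback along $[-1]\colon E \to E$ defines a morphism of $K$-group schemes $\iota\colon \sS_L \to \sS_L$ which on $\Kb$-points sends $(x,\phi)\mapsto(-x,[-1]^\ast\phi)$. The multiplicativity of $\iota$ is a direct computation using the intertwining rule $[-1]^\ast\circ\tau_y^\ast = \tau_{-y}^\ast\circ[-1]^\ast$. Moreover $\iota$ fits into a morphism of short exact sequences whose induced map on the quotient $E[P]$ is $[-1]$ and whose induced map on the center $\mu_P \subset \Gm$ is the identity; the latter holds because a central scalar $(0,\lambda\cdot\mathrm{id}_L)$ is sent by $\iota$ to itself under the canonical identification $[-1]^\ast L = L$.

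Functoriality of the coboundary applied to this ladder then yields
\[ \cbd(-x) \;=\; \cbd([-1]x) \;=\; \mathrm{id}_\ast\,\cbd(x) \;=\; \cbd(x) \qquad \text{for every } x\in E[P]. \]
Combined with the linearity of the connecting homomorphism, this forces $2\cbd(x) = 0$ in $H^1(K,\mu_P)$. The standing hypothesis $E[P]\subset E(K)$ ensures $\mu_P\subset K$, so the $\mathfrak{g}_K$-action on $\mu_P$ is trivial and $H^1(K,\mu_P) = \mathrm{Hom}(\mathfrak{g}_K,\mu_P)$; hence the vanishing of $2\cbd(x)$ as a cohomology class means $2\cbd(x)(\sigma) = 0$ in $\mu_P$ for every $\sigma\in\mathfrak{g}_K$. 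Evaluating at $x=\xi(\tau)$ produces $2d\xi(\sigma,\tau) = 2\cbd(\xi(\tau))(\sigma) = 0$, so $2d\xi$ vanishes already at the cocycle level, a fortiori in $\Br(K)[P]$.

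The step requiring honest verification is in the second paragraph: that $\iota$ really defines a $K$-group scheme homomorphism of $\sS_L$ with the asserted behavior on the subobject $\mu_P$ and quotient $E[P]$. Once this theta-group bookkeeping is in place, the cohomological conclusion is purely formal naturality of the connecting homomorphism.
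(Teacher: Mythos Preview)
Your proof is correct and follows essentially the same line as the paper's: both reduce to $2\cbd = 0$ via an involution on the theta group lifting $[-1]$ on $E[P]$ and fixing the center, then invoke functoriality of the coboundary. The only difference is cosmetic --- the paper cites Mumford directly for the existence of this involution on $\sG_L$ (then restricts to $\sS_L$), whereas you construct it explicitly from the symmetry $[-1]^\ast L \cong L$ and spell out in more detail why $2\cbd = 0$ forces $2d = 0$ at the cocycle level.
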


\begin{proof}
It suffices to show that $2\cbd = 0$. Let $\iota$ be the group inverse map on $E[P]$. 
According to~\cite[p.~308]{Mumford}, $\iota$ extends to a map on the theta group $\sG_L$ which acts as the 
identity on $\G_m$. We restrict $\iota$ to $\sS_L$. By the functoriality of $\cbd$, if $x\in\Hp^0(K,E[P]) = E[P]$, 
then $\cbd\circ\iota (x) = \cbd (x)$. But $\cbd\circ\iota(x) = \cbd(-x) = -\cbd(x)$, which proves the claim.
\end{proof}

\begin{proof}[Proof of Theorem \ref{THM2.7}]
If $P$ is odd, then $\Hp^1(K,\mu_P)$ has trivial 2-torsion. Therefore Lemma~\ref{2d=0} implies that 
$d=0$. By Lemma~\ref{Ob=hilb+d}, the conclusion follows.
\\ \\
Now suppose $P$ is even. According to~\cite[p.~310]{Mumford},
there is a map $\eta_2:\sG_{L^2} \to \sG_L$ which, upon
restriction to the subgroup schemes $\sS_L$ and $\sS_{L^2}$,
induces the commutative diagram

\begin{displaymath}
\xymatrix{\Hp^0(K,E[2P]) \ar[r]^\cbd \ar[d]^{[2]} & \Hp^1(K,\mu_{2P})\ar[d]^{[2]}\\
\Hp^0(K,E[P]) \ar[r]^\cbd & \Hp^1(K,\mu_{P})}.
\end{displaymath}
By the proof of Lemma~\ref{2d=0}, $[2]\circ\cbd$ is the zero map. Therefore $\cbd\circ[2]$ is zero. The 
hypothesis $E[2P]\subset E(K)$ implies that the left hand map above is surjective, and therefore the lower map $\cbd$ 
is zero. By Lemma~\ref{Ob=hilb+d}, the result follows.
\end{proof}

\noindent 













\section{Proofs of Theorems \ref{MT1}, \ref{MT2} and \ref{MT3}}
\noindent We first remind the reader of a standard trick: in all
work on the period-index problem it suffices to treat the case
where the period $P$ is a prime power $P = p^a$. Indeed, if a
class $\eta \in H^1(K,E)$ (or any other Galois cohomology group,
for that matter) has period $P = p_1^{a_1} \cdots p_r^{a_r}$, then
putting $\eta_i = \frac{P}{p_i^{a_i}} \eta$, one easily checks
that $\eta = \sum_{i=1}^r \eta_i$ and that $I(\eta) =
\prod_{i=1}^r I(\eta_i)$ (i.e., the index of $\eta$ is the product
of the indices of the classes $\eta_i$). The advantage of reducing
to the case $P = p^a$ is that then the index $I = p^{b}$ for $a
\leq b \leq 2a$ and then for any $D = p^c$, if the index $I$ is
less than $DP$, then indeed $I$ is a proper divisor of $DP$.

\subsection{Conditions on prime ideals and their generators}

\noindent Several times in the proofs we will be choosing pairs of prime ideals $v$, $v'$ of $\OO_K$ 
so as to satisfy certain conditions. Let us first say that a prime ideal $v$ of $K$ is \textbf{bad} (for $E$ and 
$P = p^a$) if $v$ is Archimedean, $v$ divides $p$, or $E$ has bad reduction at $v$, and is \textbf{good} otherwise. All 
but finitely many primes are good.
\\ \\
The other conditions we will impose on $v$ and $v'$ can all be achieved by using the Chebotarev density theorem. 
The conditions are
\begin{itemize}
    \item[(SC1)] The primes $v = (\pi)$  and $v'=(\pi')$ are principal, with totally positive generators $\pi$ and $\pi'$.
    \item[(SC2)] All elements of $E(K)$ are $P$-divisible in $E(K_{v})$.
    \item[(SC3)] The generators $\pi$ and $\pi'$ lie in $K_w^{\times P}$ for all bad primes $w$.
    \item[(SC4)] The order of the image of $\pi'$ in $K_v^\times/K_v^{\times P}$ is $P$.
\end{itemize}

\begin{lemma}\label{lem:choose-primes}
There exist infinitely many pairs of primes $v=(\pi)$ and $v'=(\pi')$ satisfying conditions (SC1)--(SC4).
\end{lemma}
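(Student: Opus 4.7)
The plan is a two-stage Chebotarev argument along a tower $K \subset L_1 \subset L_2$, executed in sequence so that the element $\pi'$ is available as a globally defined number before it is used to cut out the Kummer extension that enforces (SC4).

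For stage one I set up $L_1$ to handle (SC1)--(SC3) simultaneously. Let $\mathfrak{m}$ be the modulus on $K$ consisting of every real Archimedean place together with a sufficiently high power of each bad prime, ``sufficiently high'' meaning that by Hensel's lemma any $\alpha \in K^\times$ with $\alpha \equiv 1 \pmod{\mathfrak{m}}$ lies in $K_w^{\times P}$ at every bad finite $w$. Let $K^{\mathfrak{m}}$ be the corresponding narrow ray class field; a prime of $K$ splits completely in $K^{\mathfrak{m}}$ iff it is principal with a totally positive generator in $1 + \mathfrak{m}$, so for such primes (SC1) and (SC3) hold. Adjoining $F = K(\mu_P, E[P], P^{-1}E(K))$---where $P^{-1}E(K)$ denotes the coordinates of $P$-division points of a finite generating set of the Mordell--Weil group $E(K)$---takes care of (SC2): if $v$ splits completely in $F$ then every class in $E(K)/PE(K)$ is $P$-divisible in $E(K_v)$. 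Set $L_1 = F \cdot K^{\mathfrak{m}}$, apply Chebotarev to produce a prime $v'$ of $K$ split completely in $L_1$, and let $\pi'$ be its distinguished generator.

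For stage two I arrange (SC4) using a Kummer extension built from $\pi'$. Because $v'$ is good it is unramified in $L_1$, so $\pi'$ still has valuation $1$ at any prime of $L_1$ above $v'$; in particular $\pi' \notin (L_1^\times)^p$, and since $\mu_p \subset L_1$, the extension $L_2 := L_1(\sqrt[p]{\pi'})$ is Kummer of degree exactly $p$ over $L_1$. A second application of Chebotarev to $L_2/K$ yields infinitely many primes $v$ whose Frobenius in $\Gal(L_2/K)$ lies in the nontrivial coset $\Gal(L_2/L_1) \setminus \{1\}$. Any such $v$ splits completely in $L_1$---inheriting (SC1), (SC2) and the $\pi$-half of (SC3)---but does not split in $L_2$, which is the statement that $\pi'$ is not a $p$-th power in $K_v^\times$. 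Combined with $v \neq v'$ (so $\pi'$ is a unit in $K_v$) and $\mu_P \subset K_v$ (so $K_v^\times/K_v^{\times P} \cong (\Z/P)^2$ with unit summand $k_v^\times/k_v^{\times P} \cong \Z/P$), nontriviality of $\pi'$ modulo $p$-th powers forces its order modulo $K_v^{\times P}$ to be exactly $P$, which is (SC4). Letting $v$ run over this Chebotarev class produces the required infinitude of pairs $(v, v')$.

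The main obstacle is the asymmetric coupling of $v$ and $v'$ in (SC4): the Kummer extension $L_2$ depends on $\pi'$, which only exists once $v'$ has been chosen, so the two Chebotarev steps must be done in the stated order. The delicate technical point is verifying $[L_2:L_1] = p$, which is what allows the second Chebotarev step to produce positive density; this is where goodness of $v'$ is used essentially, via the valuation argument showing that $\pi'$ keeps a $p$-indivisible valuation at primes of $L_1$ over $v'$.
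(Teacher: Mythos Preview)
Your argument is correct and follows the same overall two-stage Chebotarev strategy as the paper, but with the roles of $v$ and $v'$ interchanged and a different mechanism for enforcing (SC4). The paper first selects $v$ splitting completely in the compositum $F$ of the Hilbert class field, $K([P]^{-1}E(K))$, and a suitable ray class field; it then chooses $v'$ via the ray class field $F'$ of modulus $v$, requiring $v'$ to lie in the same ray class as a unit $\alpha$ of exact order $P$ in $K_v^\times/K_v^{\times P}$, so that one may take $\pi' \equiv \alpha \pmod{v}$. You instead fix $v'$ first and then impose (SC4) by the Kummer condition that $v$ not split in $L_1(\sqrt[p]{\pi'})/L_1$, using the fact that for a $v$-unit in a cyclic $p$-group of order $P$, failure to be a $p$th power already forces order exactly $P$.

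The two devices are essentially dual under class field theory. Your route is a touch more elementary: because $\pi' \in K$ and $\mu_p \subset L_1$, the extension $L_2/K$ is automatically Galois and no separate compatibility check between two class fields is needed. The paper's ray class field argument, on the other hand, yields the sharper congruence $\pi' \equiv \alpha \pmod{v}$ directly; this extra precision is not needed here but becomes relevant in the refined version (condition (SC4$'$)) used later in the paper. Two small points you should make explicit: that $v'$ may be taken to be good (only finitely many bad primes need be excluded), and that $v \neq v'$ (automatic since $v'$ ramifies in $L_2$ while the Chebotarev primes $v$ are unramified).
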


\begin{proof}
Condition (SC1) is equivalent to $v$ and $v'$ splitting completely in the Hilbert class field of $K$. 
Condition (SC2) is equivalent to $v$ splitting completely in $K([P]^{-1}E(K))$, the field obtained by adjoining to $K$ 
all points $Q \in E(\overline{K})$ such that $[P]Q \in E(K)$. (Recall that $K([P]^{-1}E(K))$ is a finite abelian extension 
of $K$ unramified at the bad primes (e.g. \cite[p.194]{AEC}).) 
\\ \\
Let $\sm$ be the modulus given by the product of all bad primes $\sp$ and $P^2$. Then one can find $\pi$ and $\pi'$ as 
in (SC3) provided $v$ and $v'$ split completely in the ray class field for $K$ modulo $\sm$. For if $v$ splits completely, 
it has trivial Frobenius and, by class field theory, has a generator $\pi$ which is congruent to $1 \pmod{\sm}$. The 
condition follows from Hensel's Lemma.
\\ \\
Therefore, to satisfy conditions (SC1)--(SC3), we need $v$ and $v'$ to split completely in the abelian extension $F$ 
which is the compositum of the Hilbert class field of $K$, $K([P]^{-1}E(K))$, and the ray class field $K_\sm$.
\\ \\
Now we consider (SC4). Let $\alpha$ be a unit in $K_v$ which has order $P$ in $K_v^\times/K_v^{\times P}$. Let $F'$ be 
the ray class field with modulus $v$. By class field theory, the Galois group of $F'/K$ is isomorphic to the ideal class 
group with modulus $v$, $C_v$. In particular, if $v'$ and $(\alpha)$ lie in the same class in $C_v$, then $v'$ has a 
generator $\pi'$ which is congruent to $\alpha \pmod{v}$, and hence satisfies (SC4).
\\ \\
Thus, we have reduced conditions (SC1)--(SC4) to two splitting-type conditions in the abelian extensions $F$ and $F'$. 
It suffices to show that these splitting conditions are compatible, since then the Chebotarev density theorem shows there 
are infinitely many primes satisfying the conditions. 
\\ \\
The extension $F/K$ is unramified at $v$, while $F'/K$ is unramified outside $v$. Therefore $F\cap F'$ is contained 
in the Hilbert class field of $K$. Any $v'$ which lies in the same class as $(\alpha)$ in $C_v$ must be principal, 
and hence splits in $F\cap F'$. We conclude that the splitting conditions are compatible, which proves the lemma.
\end{proof}

\subsection{Proof of Theorem \ref{MT1}} \textbf{} \\ \\ \noindent
We assume in this section that $E$ has full level $\Pstar$-structure,
and maintain the setup of $\S 2.5$.  In particular, we have a
fixed isomorphism

\[ \Phi: (K^{\times}/K^{\times P})^2 \cong H^1(K,E[P]). \]
Let $v=(\pi)$ and $v'=(\pi')$ satisfy conditions (SC1)--(SC4). Put
\[\xi := \Phi(\pi^{P/D},\pi') \in H^1(K,E[P]), \] so by Theorem
\ref{THM2.7} we have
\[\Delta_P(\xi)  = \langle \pi^{P/D}, \pi' \rangle_P \in \Br(K).\]
Observe that $\Delta_P(\xi)$ is locally trivial away from $\pi$
and $\pi'$.  Indeed, by condition (SC3), the norm residue
symbol is trivial at the Archimedean places and at the places of
residue characteristic dividing $P$.  At all other places the norm
residue symbol is ``tame'' and hence vanishes locally at $w$ when
evaluated on a pair of $w$-adic units.
\\ \indent
Let $C$ be the genus one curve corresponding to the image $\eta$
of $\xi \in H^1(K,E)[P]$. Certainly the period of $\eta$ divides
$P$.  Suppose that the period of $\eta$ is less than $P$; then
(since $p^a \eta = 0$) it has period $P'$ for some proper divisor
$P'$ of $P$: $P'\xi = \iota_P(x)$. Then $\iota_P(x)$ is unramified
at $\pi'$ \cite[Prop. VIII.2.1]{AEC}, whereas $P'\xi =
(\pi^{PP'/D},(\pi')^{P'})$ is ramified at $\pi'$, a contradiction.
So $C$ has period $P$.  Moreover, by Proposition \ref{SHARPROP},

\[\Delta_{PD} i(\xi) = D \Delta_P(\xi) = D \langle \pi^{P/D}, \pi'
\rangle_P = \langle \pi^P,  \pi' \rangle_P = 0, \] 
so there exists a rational divisor of degree $PD$ on $C$ and $I(C) \ | \ PD$. \\
\indent Coming now to the heart of the matter, we suppose that the
index $I$ of $C$ strictly divides $PD$. Then, by Proposition
\ref{EASYPROP} there exists some lift $\nu$ of $\eta$ to
$H^1(K,E[I])$ such that $\Delta_I(\nu) = 0$. On the other hand,
the local-at-$\pi$ norm-residue symbol $\langle \pi^{P/D}, \pi'
\rangle_{P,\pi}$ has exact order $D$, since, by condition (SC4), the corresponding
central simple algebra trivializes over the Brauer group of an
extension $L/K_{v}$ if and onlf if $\pi'$ is a norm from the extension
$L(\pi^{\frac{1}{D}})/L$ if and onlf if $D \ | \ e(L/K)$. Therefore the
global norm residue symbol $\langle \pi^{P/D} ,\pi' \rangle_P =
\Delta_P(\xi)$ has order at least $D$; since $I/P < D$ we must
have
\[0 \neq (I/P) \cdot \Delta_P(\xi) = \Delta_I(j_{I/P}(\xi)). \]
For the remainder of the proof we shall abbreviate $j_{I/P}(\xi)$
to $j(\xi)$.  The classes $j(\xi)$ and $\nu \in H^1(K,E[I])$ are
both Kummer lifts of $\eta$ so there exists $x \in E(K)$ with
\[\iota_I(x) = \nu - j(\xi). \] 
Applying $\Delta$, we get
\[0 = \Delta_I(\nu) = \Delta_I(j(\xi)) + \Li(j(\xi),x). \]
Now recall that $(\pi)$ splits completely in $K([P]^{-1}E(K))$ by condition (SC2). 
This forces $E(K)$ to be divisible by $P$ in $E(K_v)$, and in particular $x \in PE(K_v)$.  Thus -- employing again 
the notation of (\ref{KUMMEREQ}) -- we have that 
$\iota(x)$ is locally trivial at $(\pi)$, hence so also is $\Li(j(\xi),x)$, implying that the restriction 
of $\iota(x)$ to $(\pi)$ is trivial.  It follows that the $(\pi)$-component of $\Li(j(\xi),x)$ and hence also
$\Delta_I(j(\xi))$ are trivial.  Thus $\Delta_I(j(\xi)) =
(I/P)\Delta_P(\xi)$ is locally trivial at all places except
possibly at $(\pi')$, and by the reciprocity law and Hasse
principle in the Brauer group of a local field this implies that
it is globally trivial---$\Delta_I(j(\xi)) = 0$---a
contradiction.
\\ \\
Finally, we claim that the image $\eta$ of $\xi$ under
$H^1(K,E[P]) \ra H^1(K,E)[P]$ is locally trivial away from $v$ and
$v'$.  First let $w$ be a bad prime.  Then, by construction, $\pi'
\in K_w^{\times P}$ so $\xi|_{K_w} = 0$; \emph{a fortiori} $\eta_w
= 0$.  Now suppose $w \neq v, v'$ is a good prime. Let $K_w^{\unr}$ be the maximal 
unramified extension of $K_w$. Recall that the restriction map $H^1(K_w,E)[P] \ra H^1(K_w^{\unr},E)[P]$
is injective \cite[Cor. 1]{LT}; this follows, for instance from
the triviality of WC-groups over finite fields together with the
fact that formation of the N\'eron model of a genus one curve
commutes with unramified base change.  Since
$K_w((\pi')^{\frac{1}{P}})/K_w$ is unramified, $\xi$ trivializes
over $K_w^{\unr}$.  But this implies that $\zeta |_{K_w^{\unr}} =
0$ and hence that $\eta |_{K_w} = 0$. This completes the proof of
Theorem 1.

\subsection{Proof of Theorem~\ref{MT2}: preliminaries} \textbf{} \\ \\ \noindent
First, we wish to reduce to Theorem \ref{MT1}, i.e., to the case where
$E[\Pstar]$ has trivial Galois module structure.  To this end we
introduce the splitting field $K_P = K(E[\Pstar])$ of the
$\Pstar$-torsion. We will construct classes $\theta_n$ in $\Hp^1(K_P,E[P])$ in a similar 
manner as in the proof of Theorem~\ref{MT1}, then we will set $\xi_n = \cores_{K_P/K} \theta_n$, 
and let $\eta_n$ be the image of $\xi_n$ in $\Hp^1(K,E)$. In order to prove that the $\eta_n$ have the 
right properties, we will need to compute $\res_{K_P/K} \xi_n = \res \circ \cores \theta_n$ explicitly. 
\\ \\
In the following, let $\langle, \rangle$ denote the $P$-Hilbert symbol on $\kummer{K_P}{P}$.

\subsection{Proof of Theorem \ref{MT2}: choosing pairs of primes} \textbf{} \\ \\ \noindent
In this section, we choose pairs of primes in a similar manner as in Lemma~\ref{lem:choose-primes}. The main difference 
is that we wish to choose an infinite sequence of pairs of primes $v_i, v_i'$ in $K_P$ inductively.  We will require 
conditions which are similar, and in some cases identical, to (SC1)--(SC4). These conditions are as follows:
\begin{itemize}
    \item[(SC1$'$)] The primes $v_i = (\pi_i)$ and $v'_i = (\pi'_i)$ are principal, with totally positive generators $\pi_i$ and $\pi'_i$.
    \item[(SC2$'$)] Let $\tilde{v}$ and $\tilde{v}'$ be primes of $K$ lying below $v_i$ and $v'_i$ respectively 
(for fixed $i$). Then all elements of $E(K)$ are $P$-divisible in $E(K_{\tilde{v}})$ and in $E(K_{\tilde{v}'})$.
    \item[(SC3$'$)] The generators $\pi_i$ and $\pi_i'$ lie in $(K_P)_w^{\times P}$ for all bad primes $w$ and for  $w=v_j$, $v_j'$ where $j<i$. 
    \item[(SC4$'$)] The order of the image of $\pi'_i$ in $(K_P)_{v_i}^{\times}/(K_P)_{v_i}^{\times P}$ is $P$. Additionally, $\sigma \pi'_i$ lies in $(K_P)_{v_i}^{\times P}$ for all nontrivial $\sigma \in \Gal(K_P/K)$.
    \item[(SC5$'$)] The primes $\tilde{v}, \tilde{v}'$ split completely in $K_P$.
\end{itemize}

\begin{lemma}\label{chooseq}
There exist $v_i = (\pi_i), v'_i = (\pi'_i)$ satisfying conditions (SC1$'$)--(SC5$'$).
\end{lemma}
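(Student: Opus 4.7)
The plan is to generalize the proof of Lemma \ref{lem:choose-primes} to $K_P$, accommodating the extra inductive and Galois-theoretic conditions. The primes $v_i, v_i'$ are selected one pair at a time, so I would fix $i$ and assume $v_j, v_j'$ have been chosen for all $j < i$. Throughout, write $\tilde v$ and $\tilde v'$ for the primes of $K$ lying below $v_i$ and $v_i'$.

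First I would translate each condition into a splitting or congruence condition in a suitable abelian extension of $K_P$. Condition (SC1$'$) becomes splitting of $v_i, v_i'$ in the narrow Hilbert class field of $K_P$; (SC5$'$) becomes splitting of $\tilde v$ and $\tilde v'$ in $K_P$; (SC2$'$) becomes splitting of $\tilde v$ and $\tilde v'$ in $K([P]^{-1}E(K))$, exactly as in Lemma \ref{lem:choose-primes}; and (SC3$'$) becomes splitting in the ray class field of $K_P$ with modulus $\sm$ equal to the product of $P^2$, all bad primes of $K_P$, and the finitely many primes $v_j, v_j'$ for $j < i$ --- a generator congruent to $1 \pmod \sm$ is a local $P$-th power at each of these primes by Hensel's lemma.

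The delicate condition is (SC4$'$), which asks for Galois-asymmetric local behavior of $\pi_i'$ at $v_i$. I would first choose $v_i$ by Chebotarev applied to the compositum of the abelian extensions identified above (this step is a direct analogue of Lemma \ref{lem:choose-primes}). Then, to choose $v_i'$, pick a unit $\alpha \in (K_P)_{v_i}$ whose image in $(K_P)_{v_i}^\times/(K_P)_{v_i}^{\times P}$ has order $P$. Since $\tilde v$ splits completely in $K_P$ by (SC5$'$), the primes of $K_P$ above $\tilde v$ are $\{\sigma v_i : \sigma \in \Gal(K_P/K)\}$, all distinct. Let $\sm'$ be the modulus $\prod_\sigma \sigma v_i$ on $K_P$, and use weak approximation to find $\beta \in K_P^\times$ with $\beta \equiv \alpha \pmod{v_i}$ and $\beta \equiv 1 \pmod{\sigma v_i}$ for each $\sigma \neq 1$. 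Requiring $v_i'$ to lie in the class of $(\beta)$ in the ray class group modulo $\sm'$ will produce a generator $\pi_i'$ with $\pi_i' \equiv \alpha \pmod{v_i}$ (giving the order-$P$ image condition) and $\sigma \pi_i' \equiv 1 \pmod{v_i}$ for every nontrivial $\sigma$ (giving the $P$-th-power condition on Galois conjugates).

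It then suffices to verify compatibility of all the splitting conditions simultaneously imposed on $v_i'$. The compositum arising from (SC1$'$)--(SC3$'$) and (SC5$'$) is unramified at the primes above $\tilde v$ (by the choice of $\sm$), whereas the ray class field for $\sm'$ is ramified only at those primes; hence their intersection is contained in the Hilbert class field of $K_P$, in which $(\beta)$ automatically splits since it is principal. Chebotarev then furnishes infinitely many valid $v_i'$, completing the inductive step. I expect the principal obstacle to be exactly the Galois-equivariance condition in (SC4$'$): it is what forces one to pass from the modulus $v$ used in Lemma \ref{lem:choose-primes} to the product of all $\Gal(K_P/K)$-conjugates of $v_i$, and it is what makes the weak-approximation choice of $\beta$ essential.
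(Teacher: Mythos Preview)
Your approach is essentially the same as the paper's: inductive choice of the pairs, translation of (SC1$'$)--(SC3$'$) into splitting conditions in abelian extensions of $K_P$, the key use of the $K$-rational modulus $\sm' = \prod_\sigma \sigma v_i$ together with a CRT/weak-approximation element to handle (SC4$'$), and the same compatibility argument via the Hilbert class field.

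One point needs tightening. Condition (SC5$'$) for $v_i'$ is a statement about the prime $\tilde v'$ of $K$ splitting in $K_P/K$; it cannot be phrased as a splitting condition for a prime of $K_P$ in an abelian extension of $K_P$. The paper handles this by applying Chebotarev to $FF'/K$ (not $FF'/K_P$), choosing $\tilde v'$ whose Frobenius conjugacy class in $\Gal(FF'/K)$ is that of an element $\gamma \in \Gal(FF'/K_P)$; then the Frobenius of $\tilde v'$ in $K_P/K$ is automatically trivial. For this one needs $FF'$ to be Galois over $K$, which is why the paper takes $\sm$ to include \emph{all} $\Gal(K_P/K)$-conjugates $\sigma v_j,\ \sigma v_j'$ (so that $\sm$, like $\sm'$, is $K$-rational). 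Your modulus $\sm$ omits these conjugates, and your final Chebotarev step is ambiguous about the base field: over $K_P$ it does not yield (SC5$'$) for $\tilde v'$, and over $K$ the ray class field for your $\sm$ need not be Galois over $K$. Enlarging $\sm$ to the $K$-rational modulus and running Chebotarev over $K$ fixes this immediately, and then your argument coincides with the paper's.
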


\begin{proof}
We argue inductively: suppose that we have chosen $v_j, v'_j$ for $j<i$. Let $\sm$ be the modulus given 
by the product of all bad primes in $K$, $P^2$, and all $\sigma v_j$ and $\sigma v'_j$ for 
$j<i$, $\sigma \in \Gal(K_P/K)$. Let $F$ be the compositum of $K_P([P]^{-1}E(K))$ and the $\sm$-ray class field 
of $K_P$. Note that $\sm$ is rational over $K$, so $F$ is Galois over $K$. As before, $F$ is an abelian extension of 
$K_P$. By the Chebotarev density theorem, there exists a prime $\tilde{v}$ of $K$ which splits completely in $F$. Let 
$v_i$ be any prime of $K_P$ which lies over $\tilde{v}$. Then, provided (SC5$'$) holds, the same reasoning as in 
Lemma~\ref{lem:choose-primes} shows that $v_i$ satisfies all the conditions. (We need (SC5$'$) only for condition 
(SC2$'$), for otherwise we know only that $E(K_P)$ is $P$-divisible in $E((K_P)_{v_i})$.)
\\ \\
For simplicity, write $v$ in place of $v_i$. Let $\beta$ be a unit in $(K_P)_v$ which has order $P$ in 
$(K_P)_v^\times/(K_P)_v^{\times P}$. By the Chinese Remainder Theorem, there exists $\alpha\in K_P$ such that
\begin{align} \label{betacong}
\alpha & \equiv \beta \pmod{v} \notag \\
\alpha & \equiv 1 \pmod{\sigma v} \quad \forall \sigma \in \Gal(K_P/K),\, \sigma\neq 1.
\end{align}
Let $F'$ be the ray class field for $K_P$ with modulus $\sm' = \prod \sigma v$. Again, $\sm'$ is rational over $K$, 
so that $F'$ is Galois over $K$. Let $C_{\sm'}$ be the class group for $K_P$ with modulus $\sm'$. The Artin reciprocity 
map  gives an isomorphism $C_{\sm'} \to \Gal(F'/K_P)$. Let $\gamma_{F'}$ be the image of $(\alpha)$ under this isomorphism.  
Since $F\cap F'$ is contained in the Hilbert class field of $K_P$ and $(\alpha)$ is principal, there exists $\gamma \in \Gal(FF'/K_P)$ such that $\gamma|_{F'} = \gamma_{F'}$ and $\gamma|_F$ is the identity. Since $FF'$ is Galois over $K$, we view $\Gal(FF'/K_P)$ as a subgroup of $\Gal(FF'/K)$. Let 
$[\gamma]$ be the conjugacy class of $\gamma$ in this larger Galois group.  By Chebotarev, there exists a prime 
$\tilde{v}'$ of $K$ such that any Frobenius associated to $\tilde{v}'$ in the extension $FF'/K$ lies in $[\gamma]$. Let 
$v'_i$ be a prime of $K_P$ lying over $\tilde{v}'$. By replacing $v'_i$ by a conjugate if necessary, we may assume that the Frobenius of $v'_i$ in the extension $FF'/K_P$ is precisely $\gamma$ (the extension here is abelian, so saying ``the'' Frobenius makes sense). By the same arguments as in Lemma~\ref{lem:choose-primes}, $v'_i$ 
satisfies the first three conditions. 
\\ \\
One sees that $\pi'_i \equiv \alpha \pmod{(\pi_i)}$, so that the order of $\pi'_i$ in 
$(K_P)_{v_i}^{\times}/(K_P)_{v_i}^{\times P}$ is $P$. Also, $\pi'_i \equiv 1 \pmod{(\sigma \pi_i)}$ 
for nontrivial $\sigma$, so that $\sigma \pi'_i \equiv 1 \mod{(\pi_i)}$. Therefore $v'_i$ satisfies condition (SC4$'$). 
\\ \\
Any Frobenius associated to $\tilde{v}'$ in the extension $K_P/K$ is trivial, so that $\tilde{v}'$ splits in 
$K_P$, thus satisfying (SC5$'$).
\end{proof}

\subsection{Proof of Theorem \ref{MT2}: corestrictions} 
\label{sec:corestriction-map} 
\textbf{} \\ \\ \noindent
As in the proof of Theorem~\ref{MT1}, a choice of basis for $E[P]$ yields an isomorphism

\[ \Phi:\kummer{K_{P}}{P} \to \Hp^1(K_{P}, E[P]).\]
Let $\theta_n$ be either $\Phi(\pi_n,\pi'_n)$ or $\Phi(\pi_n,1)$, i.e., we will need to consider both cases.
Let $\cores$ be the corestriction map
\[ \Hp^1(K_{P},E[P]) \to \Hp^1(K,E[P]), \]
and write $\xi_n = \cores \theta_n$. In order to prove Theorem~\ref{MT2}, we would like to compute 
$\Ob_P (\xi_n - \xi_m)$ as well as the period of $(\xi_n-\xi_m)$. To do this, we will instead compute the 
obstruction and period of $\res (\xi_n - \xi_m)$, where $\res$ is the restriction map

\[ \Hp^1(K,E[P]) \to \Hp^1(K_{P},E[P]).\]
Both $\res$ and $\cores$ are $\Z$-linear, so it will suffice to compute $\res\circ\cores(\Phi(\pi_n,1))$ and 
$\res\circ\cores(\Phi(1,\pi'_n))$. 
\\ \\
Let $\Nm\in {\rm End}(\Hp^1(K_{P}, E[P]))$ be given, on the level of cocycles, by
\[ \Nm(\theta)(\sigma) = \sum_{\overline{\gamma}\in\Gal(K_{P}/K)} \gamma \cdot \theta(\gamma^{-1}\sigma\gamma), \]
where $\gamma$ is a fixed lift of $\overline{\gamma}$ to $\mathfrak{g}_K$. Since $E[P]$ is rational over 
$K_P$, there is a unique cocycle in each cohomology class, so that $\Nm$ is well-defined as an endomorphism of 
$\Hp^1(K_P, E[P])$.

\begin{lemma}
\label{RESCORESLEMMA}
If $\theta \in \Hp^1(K_{P}, E[P])$, then $\res\circ\cores \theta = \Nm \theta$.
\end{lemma}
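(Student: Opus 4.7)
The plan is to reduce this to a direct cocycle computation using the explicit degree-one formula for corestriction. Write $G = \mathfrak{g}_K$ and $H = \mathfrak{g}_{K_P}$; since $K_P/K$ is Galois, $H$ is a normal subgroup of $G$ of finite index with $G/H \cong \Gal(K_P/K)$.

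First I would fix a set of coset representatives $\{\gamma\}$ for $G/H$, compatible with the lifts already used in the statement, and recall the standard cochain-level formula for corestriction in degree one. For each $g \in G$ and each representative $\gamma$, write $g\gamma = \gamma' h_{g,\gamma}$, where $\gamma'$ is the unique representative with $g\gamma \in \gamma' H$ and $h_{g,\gamma} = (\gamma')^{-1} g \gamma \in H$. For any $1$-cocycle $\theta \in Z^1(H, E[P])$, one then has
\[
\cores(\theta)(g) \;=\; \sum_\gamma \gamma \cdot \theta(h_{g,\gamma}).
\]

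Second I would restrict to $\sigma \in H$ and exploit normality. Because $H$ is normal in $G$, $\gamma^{-1}\sigma\gamma \in H$ for every $\gamma$, so $\sigma\gamma H = \gamma H$, forcing $\gamma' = \gamma$ and hence $h_{\sigma,\gamma} = \gamma^{-1}\sigma\gamma$. Substituting into the corestriction formula gives
\[
\res \circ \cores(\theta)(\sigma) \;=\; \sum_{\bar\gamma \in \Gal(K_P/K)} \gamma \cdot \theta(\gamma^{-1}\sigma\gamma),
\]
which is precisely $\Nm(\theta)(\sigma)$.

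The only point that remains to confirm is that the expression above depends only on the cohomology class of $\theta$ (and not on the choice of lifts $\gamma$ of $\bar\gamma$); this is immediate from the hypothesis that $E[P]$ is rational over $K_P$, since then $H$ acts trivially on $E[P]$, $\Hp^1(H, E[P]) = \Hom(H, E[P])$, and each class is represented by a unique homomorphism, so any ambiguity in the choice of $\gamma$ is absorbed automatically. I do not expect a serious obstacle here: the whole argument is a bookkeeping check against the standard corestriction formula, and the paper's discussion preceding the lemma has already isolated exactly the hypothesis needed for the formula to make sense unambiguously at the cocycle level.
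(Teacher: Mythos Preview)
Your argument is correct. The explicit cochain-level formula for corestriction in degree one, restricted to the normal subgroup $H = \mathfrak{g}_{K_P}$, does collapse to the conjugation sum defining $\Nm$, and the triviality of the $H$-action on $E[P]$ is exactly what makes the resulting expression well-defined on cohomology classes and independent of the choice of lifts $\gamma$.

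This is, however, not the route the paper takes. The paper simply invokes dimension shifting: corestriction is \emph{defined} on $\Hp^0$ as the norm $x \mapsto \sum_{\bar\gamma} \gamma \cdot x$, and the identity $\res \circ \cores = \Nm$ is then transported to $\Hp^1$ via the long exact sequence associated to an embedding of $E[P]$ into a cohomologically trivial module (this is the standard argument in \cite[p.~119]{CL}). Your approach trades that abstract machinery for a direct cocycle check using the transfer-style formula for $\cores$ in degree one. The advantage of your version is that it is self-contained and makes the formula for $\Nm$ appear inevitably; the advantage of the paper's version is brevity and the avoidance of any need to verify the explicit degree-one corestriction formula (which itself is usually established by dimension shifting anyway). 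One small caution: in the general (non-normal) formula you quote, the coefficient is often written as $\gamma'$ rather than $\gamma$, but since $\gamma' = \gamma$ once you restrict to $\sigma \in H$ with $H$ normal, this does not affect your conclusion.
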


\begin{proof}
The lemma follows from the definition of $\cores$ on $\Hp^0(K_{P}, E[P])$ and dimension shifting; see for 
example~\cite[p.119]{CL}.
\end{proof}
\noindent
In the remainder of this section, we drop the subscript $n$.
\\ \\
Lemma \ref{RESCORESLEMMA} shows that $\res\circ\cores(\Phi(\pi,1)) = \Nm(\Phi(\pi,1))$. Unfortunately, 
$\Nm$ and $\Phi$ do not commute, as the Galois actions on $E[P]$ and $\mu_P\times\mu_P$ differ. The representation 
on $E[P]$ gives, with respect to our fixed basis, a homomorphism 

\begin{align*}
\Gal(K_{P}/K) &\to \Gl_2(\Z/P\Z) \\
\sigma &\mapsto M_\sigma = \left(\begin{array}{cc} 
i(\sigma) & j(\sigma) \\
k(\sigma) & \ell(\sigma) \end{array}\right).
\end{align*}
Then we have

\begin{proposition}
Let $\sigma \in \Gal(K_{P}/K)$ and $(a,b) \in \kummer{K_{P}}{P}$. Then
\[ \Phi(a,b)^\sigma = \Phi\left(\frac{M_\sigma}{\det M_\sigma} (\sigma a, \sigma b)\right), \]
where $M_\sigma (a, b)$ is given by the natural action of $\Gl_2(\Z/P\Z)$ on $\kummer{K_{P}}{P}$; 
that is, $M_\sigma(a,b) = (a^{i(\sigma)}b^{j(\sigma}),a^{k(\sigma)}b^{\ell(\sigma)})$.
\end{proposition}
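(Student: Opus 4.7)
The plan is to unpack the definition of $\Phi$ in terms of Kummer cocycles, apply the standard formula for the action of $\Gal(K_P/K)$ on $H^1(K_P, E[P])$, and combine two independent twists: the way the Kummer cocycle of $a$ transforms under conjugation by a lift $\tilde\sigma$ of $\sigma$, and the way $\tilde\sigma$ acts on the coefficient module $E[P]$ via $M_\sigma$.

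First I would represent $\Phi(a,b)$ by an explicit cocycle. After fixing $P$th roots $a^{1/P}, b^{1/P} \in \Kbar$, the Kummer class of $a$ is the $\mu_P$-valued cocycle $\tau \mapsto \tau(a^{1/P})/a^{1/P}$; via the identification $\mu_P \cong \Z/P\Z$ induced by $\zeta = e_P(S,T)$, this becomes the $\Z/P$-valued cocycle $c_a$ determined by $\zeta^{c_a(\tau)} = \tau(a^{1/P})/a^{1/P}$ (and analogously $c_b$). With respect to the basis $(S,T)$, the class $\Phi(a,b)$ is then represented by the cocycle $\tau \mapsto c_a(\tau) S + c_b(\tau) T$.

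The action of $\sigma \in \Gal(K_P/K)$ on $H^1(K_P, E[P])$ is given on cocycles by $c^\sigma(\tau) = \tilde\sigma \cdot c(\tilde\sigma^{-1}\tau\tilde\sigma)$ for any lift $\tilde\sigma \in \mathfrak{g}_K$, and I would compute $[\Phi(a,b)]^\sigma(\tau)$ by splitting into the two promised pieces. For the Kummer piece, I apply $\tilde\sigma$ to both sides of $\zeta^{c_a(\tilde\sigma^{-1}\tau\tilde\sigma)} = (\tilde\sigma^{-1}\tau\tilde\sigma)(a^{1/P})/a^{1/P}$; writing $(\sigma a)^{1/P} := \tilde\sigma(a^{1/P})$ (a $P$th root of $\sigma a$, since $a \in K_P$), the right-hand side becomes $\tau((\sigma a)^{1/P})/(\sigma a)^{1/P} = \zeta^{c_{\sigma a}(\tau)}$, while the left-hand side becomes $\zeta^{\chi(\sigma) c_a(\tilde\sigma^{-1}\tau\tilde\sigma)}$ using $\tilde\sigma(\zeta) = \zeta^{\chi(\sigma)}$ for the cyclotomic character $\chi$ (well-defined on $\Gal(K_P/K)$ because $\mu_P \subset K_P$). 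This gives $c_a(\tilde\sigma^{-1}\tau\tilde\sigma) = \chi(\sigma)^{-1} c_{\sigma a}(\tau)$, and symmetrically for $c_b$. For the coefficient piece, the very definition of $M_\sigma$ reads $\tilde\sigma(xS + yT) = (i(\sigma)x + j(\sigma)y) S + (k(\sigma)x + \ell(\sigma)y) T$, i.e., left multiplication by $M_\sigma$ on the column vector $(x,y)^T$.

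Combining the two pieces shows that $c^\sigma$ corresponds under $\Phi$ to $\chi(\sigma)^{-1} M_\sigma (\sigma a, \sigma b)$. The final step is to identify $\chi(\sigma) = \det M_\sigma$, which is immediate from the Galois-equivariance and alternating bilinearity of the Weil pairing:
\[ \zeta^{\chi(\sigma)} = \tilde\sigma(\zeta) = \tilde\sigma(e_P(S,T)) = e_P(\tilde\sigma S, \tilde\sigma T) = e_P(S,T)^{\det M_\sigma} = \zeta^{\det M_\sigma}. \]
Substituting yields exactly $[\Phi(a,b)]^\sigma = \Phi\bigl((\det M_\sigma)^{-1} M_\sigma (\sigma a, \sigma b)\bigr)$. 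The main obstacle is not conceptual but bookkeeping: keeping straight the matrix conventions (columns versus rows, left versus right action), the conjugation direction in the cocycle-action formula, and the compatible choice of distinguished $P$th roots on both sides. Once these are pinned down carefully, the remaining calculation is mechanical.
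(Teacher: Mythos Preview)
Your proof is correct and follows essentially the same approach as the paper: both compute the conjugation action on an explicit cocycle and separate the contribution of $\tilde\sigma$ acting on the coefficient module $E[P]$ (giving the factor $M_\sigma$) from its action on the $\mu_P$-valued Kummer cocycle (giving the factor $(\det M_\sigma)^{-1}$ via the cyclotomic character). The paper merely packages the computation differently, introducing an auxiliary twisted module $(\mu_P\times\mu_P)_\rho$ and a small commutative diagram in place of your direct basis-and-cocycle bookkeeping, but the underlying calculation is the same.
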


\begin{proof}
Our choice of basis for $E[P]$ gives rise to a group isomorphism
\[ \rho\colon  E[P] \to \mu_P \times \mu_P. \]
Define a $\Z[\Gal(K_{P}/K)]$-module $(\mu_P \times \mu_P)_\rho$ which,
as a $\Z$-module, is $\mu_P\times \mu_P$, but which possesses a
Galois structure making $\rho$ into a $\Gal(K_{P}/K)$-equivariant
map. In particular, if $(\zeta_1, \zeta_2) \in (\mu_P \times
\mu_P)_\rho$ and $\sigma\in \Gal(K_{P}/K)$, we have

\begin{align*}
\rho\circ\sigma\circ\rho^{-1} (\zeta_1, \zeta_2) & = \sigma (\zeta_1, \zeta_2)\\
& = M_\sigma (\zeta_1, \zeta_2).
\end{align*}
On the other hand, for $(\zeta'_1, \zeta'_2) \in \mu_P \times
\mu_P$ the Galois action is

\[ \sigma(\zeta'_1, \zeta'_2) = \det M_\sigma \cdot
(\zeta'_1, \zeta'_2), \]
where the action on the right is the diagonal action of $\Z/P\Z$.
\\ \\
Let $i: \mu_P \times \mu_P \to (\mu_P \times \mu_P)_\rho$ be the
canonical group isomorphism; it does not respect the
$\Gal(K_{P}/K)$-action. If $A$ is any $G_{K_{P}}$-module, write
$\Hp^1(A)$ for $\Hp^1(K_{P}, A)$. Then $i$ induces a map

\[ i_*:\Hp^1(\mu_P \times \mu_P) \to \Hp^1((\mu_P \times \mu_P)_\rho). \]
Let $M$ be either $(\mu_P \times \mu_P)_\rho$ or $\mu_P \times \mu_P$. 
Since in either case $M$ is a trivial $G_{K_{P}}$-module, the set of coboundaries 
$B^1(K_{P}, M)$ is zero, and so $\Hp^1(K_{P}, M) = Z^1(K_{P}, M)$, the set of
1-cocycles from $G_{K_{P}}$ to $M$. We can therefore identify
cohomology classes with cocycles in both cases.
\\ \\
Consider the commutative diagram

\begin{equation} \label{mu_P rho}
\xymatrix{\kummer{K_{P}}{P} \ar[r]^-\psi \ar[rd]_{\psi_\rho} & \Hp^1(\mu_P\times \mu_P) \ar[d]^{i_*} & \\
& \Hp^1((\mu_P\times\mu_P)_\rho) \ar[r]_-\lambda & \Hp^1(E[P])}.
\end{equation}
The horizontal maps are $\Gal(K_{P}/K)$-isomorphisms. The map $\lambda$ is induced by $(i\circ\rho)^{-1}$, and 
$\psi$ is the Kummer map. The diagonal map $\psi_\rho$ is $\psi\circ i_*$. Thus, $\Phi = \lambda\circ\psi_\rho$.  
Note that $\mathfrak{g}_K$ acts on all of the groups in~\eqref{mu_P rho} through its quotient $\Gal(K_{P}/K)$. Let $\gamma$ be an element of $\mathfrak{g}_{K_{P}}$ and 
$\sigma$ an element of $\mathfrak{g}_K$. Then

\begin{align} \label{kappa madness}
[\psi_\rho(a,b)]^\sigma (\gamma) & = [i_*\psi(a,b)]^\sigma(\gamma) \notag \\
& = \sigma [i(\psi(a,b)(\sigma^{-1}\gamma\sigma))] \notag \\
& = \sigma [i(\sigma^{-1}\sigma \psi(a,b)(\sigma^{-1}\gamma\sigma))] \notag \\
& = \sigma [i(\sigma^{-1} \psi(\sigma a, \sigma b)(\gamma))] \\
& = M_\sigma[(i(\det M_\sigma^{-1} \cdot \psi(\sigma a, \sigma b)(\gamma))] \notag \\
& = \frac{M_\sigma}{\det M_\sigma} [i(\psi(\sigma a,\sigma b)(\gamma))] \notag \\
& = \frac{M_\sigma}{\det M_\sigma} \psi_\rho(\sigma a, \sigma b) (\gamma) \notag
\end{align}
Applying $\lambda$ on both sides, we obtain the result.
\end{proof}

\begin{corollary}\label{cor:explicit-norm}
We have 
\[\Nm \Phi((a,b))    =  \Phi \left(\prod \sigma a^{i(\sigma)} \sigma b^{j(\sigma)},\prod \sigma a^{k(\sigma)} 
\sigma b^{\ell(\sigma)}\right), \]
where the product extends over all $\sigma \in \Gal(K_{P}/K)$.
\end{corollary}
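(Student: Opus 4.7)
The plan is to unwind the definition of $\Nm$ using the preceding proposition; no genuinely new ideas are needed.

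First, I would re-express $\Nm$ on cohomology as the sum over the natural $\Gal(K_{P}/K)$-action. Writing this action on $\Hp^1(K_{P},E[P])$ as $(\sigma\cdot\theta)(\gamma) = \sigma\cdot\theta(\sigma^{-1}\gamma\sigma)$ for any lift of $\sigma \in \Gal(K_{P}/K)$ to $\mathfrak{g}_K$, the definition of $\Nm$ given at the start of the subsection reads exactly $\Nm\theta = \sum_{\sigma \in \Gal(K_P/K)} \sigma\cdot\theta$. Independence of the choice of lift and well-definedness at the level of cohomology classes are immediate from the fact (noted when $\Nm$ was introduced) that $E[P]$ is a trivial $\mathfrak{g}_{K_{P}}$-module, so that each cohomology class contains a unique cocycle representative.

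Next, I would apply the preceding proposition termwise: for each $\sigma$ it gives
\[\sigma\cdot\Phi(a,b) = \Phi\!\left(\frac{M_\sigma}{\det M_\sigma}(\sigma a,\sigma b)\right).\]
Summing over $\sigma \in \Gal(K_{P}/K)$, and using that $\Phi$ is an isomorphism of abelian groups carrying the multiplicative structure on $\kummer{K_{P}}{P}$ to the additive structure on $\Hp^1(K_{P},E[P])$, I would conclude
\[\Nm\Phi(a,b) = \Phi\!\left(\prod_{\sigma \in \Gal(K_P/K)} \frac{M_\sigma}{\det M_\sigma}(\sigma a,\sigma b)\right).\]

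Finally, I would unpack the product on the right component-wise using the explicit formula $M_\sigma(x,y) = (x^{i(\sigma)}y^{j(\sigma)},x^{k(\sigma)}y^{\ell(\sigma)})$ from the preceding proposition, applied with $(x,y) = (\sigma a,\sigma b)$. This is pure bookkeeping inside the multiplicative group $\kummer{K_{P}}{P}$, and collecting the two coordinates across $\sigma$ produces the two products appearing in the corollary. There is no real obstacle: the substantive computation -- the compatibility between the $\Gal(K_{P}/K)$-action on $\Hp^1(K_{P},E[P])$ and the Kummer description via $\Phi$ -- has already been carried out in the previous proposition, and the corollary is just its termwise summation.
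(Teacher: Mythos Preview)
Your approach is exactly the paper's: the corollary is stated there without proof, as immediate from the preceding proposition, and your argument is precisely the termwise summation of that proposition over $\Gal(K_P/K)$.

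One bookkeeping point deserves mention. Your penultimate display correctly carries the factor $\frac{M_\sigma}{\det M_\sigma}$ from the proposition, but in the final unpacking you use the entries $i(\sigma),j(\sigma),k(\sigma),\ell(\sigma)$ of $M_\sigma$ itself, silently dropping the $(\det M_\sigma)^{-1}$. Strictly speaking, what the proposition yields is
\[
\Nm\Phi(a,b)=\Phi\Bigl(\prod_\sigma (\sigma a)^{i(\sigma)/\det M_\sigma}(\sigma b)^{j(\sigma)/\det M_\sigma},\ \prod_\sigma (\sigma a)^{k(\sigma)/\det M_\sigma}(\sigma b)^{\ell(\sigma)/\det M_\sigma}\Bigr),
\]
so the exponents in the stated corollary are off by a unit in $\Z/P\Z$. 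This is a discrepancy already present in the paper between the proposition and the corollary as written, not a flaw in your method; and since the only subsequent use of the corollary is the qualitative fact that each coordinate of $\Nm\Phi(a,b)$ is a product of integer powers of the $\sigma a$ and $\sigma b$ (with the identity contributing the expected exponent), the omission is harmless for everything that follows.
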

\noindent
Let  $(c,d) = \Phi^{-1} \Nm \Phi (\pi,1)$ and $(c',d') = \Phi^{-1} \Nm  \Phi (1,\pi')$.

\begin{lemma} \label{a,b=P}
Let $v$ be the place of $K_P$ corresponding to $\pi$. Either $\#\langle c,d \rangle_v = P$ or $\#\langle cc', dd' \rangle_v = P$.
\end{lemma}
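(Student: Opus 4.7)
The plan is to reduce to a direct tame-symbol computation at $v$. As noted in the preamble to Section~3, we may assume $P = p^a$ is a prime power; this will be essential in the final step.

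First I would apply Corollary~\ref{cor:explicit-norm} to write
\[ (c,d) = \Bigl(\prod_\sigma (\sigma\pi)^{i(\sigma)},\ \prod_\sigma (\sigma\pi)^{k(\sigma)}\Bigr), \qquad (c',d') = \Bigl(\prod_\sigma (\sigma\pi')^{j(\sigma)},\ \prod_\sigma (\sigma\pi')^{\ell(\sigma)}\Bigr). \]
Since $M_1 = I$ gives $i(1) = \ell(1) = 1$ and $j(1) = k(1) = 0$, and since by (SC5$'$) the prime of $K$ below $v$ splits completely in $K_P/K$, the conjugates $\sigma v$ are distinct as $\sigma$ varies, so $v(\sigma\pi) = 0$ for all $\sigma \neq 1$. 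As $\tilde v' \neq \tilde v$, all of the $\sigma\pi'$ are $v$-adic units. I would therefore read off
\[ v(c) = 1, \qquad v(d) = v(c') = v(d') = 0. \]

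Next I would apply the tame-symbol formula. Since $v$ is a good prime---in particular, of residue characteristic coprime to $P$---and $\mu_P \subset K_P \subset (K_P)_v$, the group $k_v^\times/k_v^{\times P}$ is cyclic of order $P$, and the local $P$-Hilbert symbol $\langle \cdot,\cdot\rangle_v$ has order equal to the order of its tame symbol in this quotient. For any pair $(a,b)$ with $v(a) = 1$ and $v(b) = 0$, the tame symbol is $\overline b^{-1}$. Hence the order of $\langle c,d\rangle_v$ equals the order of $\overline d$ in $k_v^\times/k_v^{\times P}$, and the order of $\langle cc',dd'\rangle_v$ equals the order of $\overline{dd'}$ there.

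The heavy lifting is done by (SC4$'$). Since $\ell(1) = 1$ and by (SC4$'$) we have $\sigma\pi' \in (K_P)_v^{\times P}$ for every nontrivial $\sigma$, I would deduce $d' \equiv \pi' \pmod{(K_P)_v^{\times P}}$. The other clause of (SC4$'$) says $\pi'$ has order $P$ in $(K_P)_v^\times/(K_P)_v^{\times P}$; combined with the canonical identification $\OO_v^\times/\OO_v^{\times P} \cong k_v^\times/k_v^{\times P}$ (valid as the residue characteristic of $v$ is coprime to $P$), this forces $\overline{d'}$ to generate $k_v^\times/k_v^{\times P}$. Finally, write $\overline d = \overline{d'}^{\,m}$. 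If $\overline d$ already has order $P$, the first alternative holds. Otherwise, since the group has prime-power order $p^a$, the order of $\overline d$ being strictly smaller forces $p \mid m$, whereupon $\overline{dd'} = \overline{d'}^{\,m+1}$ with $\gcd(m+1,p) = 1$ and so has order $p^a = P$, giving the second alternative. The only delicate bookkeeping is extracting the right valuation and unit-part reductions from (SC4$'$) and (SC5$'$); the prime-power assumption enters essentially at the very end, and indeed the analogue of the lemma fails for composite $P$ with two distinct prime factors.
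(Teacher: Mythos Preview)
Your proof is correct and follows essentially the same route as the paper's: both compute locally at $v$, use (SC4$'$) to show that the $\pi'$-contribution has order exactly $P$ there, and invoke the prime-power hypothesis at the end to conclude. The only cosmetic difference is that the paper expands $\langle cc',dd'\rangle_v$ bilinearly into four Hilbert-symbol terms and isolates the one of order $P$, whereas you go directly through the tame-symbol formula to reduce everything to the order of $\overline{dd'}$ in $k_v^\times/k_v^{\times P}$; your packaging is arguably cleaner and sidesteps some bookkeeping.
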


\begin{proof}
If $\#\langle c, d\rangle =P$, then we are done. So suppose that $\#\langle c, d\rangle <P$. In fact, since $P$ is a prime power, 
the order strictly divides $P$.
\\ \\
Expanding out the Hilbert symbol, we get
\[ \langle cc', dd'\rangle = \langle c, d \rangle + \langle c, d' \rangle + \langle c', d \rangle + \langle c', d' \rangle.\]
We have $\langle c, d' \rangle_v=\langle c', d' \rangle_v = 0$ since all are $v$-adic units. By our assumption at the 
start of the proof, $\langle c, d\rangle_v$ has order strictly dividing $P$. That leaves $\langle c', d\rangle_v$. By 
Corollary~\ref{cor:explicit-norm}, $c'=\pi' \cdot \displaystyle\prod_{\sigma\neq 1} (\sigma \pi')^{e_\sigma}$ for some 
integers $e_\sigma$. Our choice of $\pi'$ implies that $\pi'\equiv \alpha \pmod{(\pi)}$, where $\alpha$ was chosen to 
have order $P$ in $K_v^{\times P}$, while $\sigma \pi' \equiv 1 \pmod{(\pi)}$ for nontirivial $\sigma$ 
(see~\eqref{betacong}). Thus $c'\equiv \alpha \pmod{(\pi)}$. Therefore $K_v(c'^{1/P})/K$ is the unramified extension of 
degree $P$. (Equivalently, we may appeal to condition (SC4$'$).)
\\ \\
We now use similar reasoning as in the proof of Theorem~\ref{MT1} to see that $\langle c', \pi\rangle_v$ has order $P$. 
Since $v(d)=1$, the order of $\langle c', d\rangle_v$ is exactly $P$. This shows $\langle cc',dd' \rangle_v$ has exact order $P$.
\end{proof}
\noindent
If $\langle c,d\rangle$ has order $P$, let $\theta = \Phi(\pi,1)$, so 
that $\xi = \cores \theta$ satisfies $\res \xi = \Nm\Phi(\pi,1) = \Phi(c,d)$. Otherwise, 
let $\theta = \Phi(\pi,\pi')$, so that $\res \xi = \Phi(cc',dd')$. Let $(a,b)$ denote whichever pair 
we've chosen, $(c,d)$ or $(cc',dd')$.
\\ \\
Let us now reintroduce subscripts, so that 

\begin{align*}
\xi_n &= \cores \theta_n \\
&= \begin{cases} \cores \Phi(\pi_n, \pi'_n) \text{ or } \\
\cores \Phi(\pi_n, 1) \end{cases} \\
(a_n,b_n) &= \Phi^{-1} \res \xi_n.
\end{align*}

\begin{lemma}\label{lemma:sum a}
Let $0 \leq m < n$.  Then $\Delta_P(\res (\xi_m - \xi_n))$ has order $P$ at $v_m$.
\end{lemma}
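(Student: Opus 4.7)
The plan is to compute the obstruction $\Delta_P(\res(\xi_m-\xi_n))$ directly using the explicit Hilbert symbol formula, exploiting the fact that $K_P$ contains the full $P^*$-torsion so Theorem \ref{THM2.7} applies there.

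First, since $K_P = K(E[P^*])$, Theorem \ref{THM2.7} gives $\Delta_P = \langle\ ,\ \rangle_P$ on $\Hp^1(K_P,E[P])$ after identifying the latter via $\Phi$ with $\kummer{K_P}{P}$. By definition of $(a_n,b_n)$ and $\Z$-linearity of $\res$ and $\cores$, we have $\res(\xi_m - \xi_n) = \Phi(a_m a_n^{-1}, b_m b_n^{-1})$, so
\[ \Delta_P(\res(\xi_m - \xi_n)) = \langle a_m a_n^{-1}, b_m b_n^{-1} \rangle_P. \]
Bilinearity expands this as
\[ \langle a_m, b_m \rangle - \langle a_m, b_n \rangle - \langle a_n, b_m \rangle + \langle a_n, b_n \rangle. \]

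Next I localize at $v_m$. The key observation is that $a_n$ and $b_n$ are $P$-th powers in $(K_P)_{v_m}^\times$. Indeed, by Corollary \ref{cor:explicit-norm}, $a_n$ and $b_n$ are products of the form $\prod_\sigma (\sigma\pi_n)^{*} (\sigma\pi'_n)^{*}$. In the inductive construction of Lemma \ref{chooseq}, the modulus $\sm$ used to pick $\pi_n, \pi'_n$ at stage $n$ includes all conjugates $\sigma v_j, \sigma v'_j$ for $j<n$ (and is in force because $m<n$ implies $m$ is among the earlier indices). Hence $\pi_n \equiv 1 \pmod{(\sigma v_m)^{P^2}}$ and likewise for $\pi'_n$, which by Hensel's lemma forces $\sigma^{-1}\pi_n, \sigma^{-1}\pi'_n \in (K_P)_{v_m}^{\times P}$ for every $\sigma \in \Gal(K_P/K)$. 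Consequently $a_n, b_n \in (K_P)_{v_m}^{\times P}$, so the three Hilbert symbols involving at least one of $a_n$, $b_n$ vanish at $v_m$.

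What remains is $\langle a_m, b_m \rangle_{v_m}$, which has exact order $P$ by Lemma \ref{a,b=P} (this is precisely what dictated our choice of $\theta_m$ in the two cases). Therefore $\Delta_P(\res(\xi_m-\xi_n))$ has order $P$ at $v_m$, as claimed.

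I expect the main obstacle to be the bookkeeping in step two: one must verify carefully that \emph{all} Galois conjugates of $\pi_n$ and $\pi'_n$ — not merely $\pi_n$ and $\pi'_n$ themselves — become $P$-th powers at $v_m$, since $\Nm\Phi$ mixes in such conjugates. This is precisely the reason that condition (SC3$'$) in Lemma \ref{chooseq} is strengthened (via the $\Gal(K_P/K)$-stable modulus $\sm$) relative to its analogue (SC3) in the single-prime setting.
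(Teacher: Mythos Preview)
Your proof is correct and follows essentially the same approach as the paper: expand $\langle a_m a_n^{-1}, b_m b_n^{-1}\rangle$ bilinearly, invoke Lemma~\ref{a,b=P} for the term $\langle a_m, b_m\rangle_{v_m}$, and kill the three cross-terms (the paper dispatches two of them by observing that $a_n$, $b_m$, $b_n$ are units at $v_m$ so the tame symbol vanishes, and the remaining one via (SC3$'$), whereas you uniformly use the $P$-th power property --- both are valid). One small slip: the modulus $\sm$ in Lemma~\ref{chooseq} contains each $\sigma v_m$ only to the first power (the ``$P^2$'' there is the ideal $(P^2)$, not an exponent on the good primes), but since $v_m$ has residue characteristic prime to $P$, the congruence $\pi_n \equiv 1 \pmod{\sigma v_m}$ already forces $\sigma^{-1}\pi_n \in (K_P)_{v_m}^{\times P}$, so your conclusion is unaffected.
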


\begin{proof}
Write $v$ for $v_m$. Since $E[\Pstar]\subset E(K_{P})$, the obstruction map can be computed using the Hilbert symbol.  Thus 
we wish to compute the order of

\[ \left\langle \frac{a_m}{a_n},\frac{b_m}{b_n} \right\rangle_v. \]
By the bilinearity of the Hilbert symbol, it suffices to compute

\[ \langle a_m, b_m\rangle_v -\langle a_m, b_n\rangle_v - \langle a_n, b_m \rangle_v + \langle a_n, b_n\rangle_v. \]
By Lemma~\ref{a,b=P}, the first term has order $P$. Since $a_n, b_m$ and $b_n$ are all units at $v$, the last two terms 
are zero. That leaves the term $\langle a_m, b_n\rangle_v$. By Corollary~\ref{cor:explicit-norm}, $b_n$ is a product 
of $\sigma \pi_n$ and $\sigma \pi'_n$. By condition (SC3$'$), these all lie in $K_v^{\times P}$. Therefore the second 
term is also zero. The Lemma follows.
\end{proof}

\subsection{Proof of Theorem \ref{MT2}: conclusion} \textbf{} \\ \\ \noindent
Let $C$ be the curve represented by the class $\xi:=\xi_i - \xi_j$ for some $i\neq j$. Clearly, $P(C) \ | \ P$.  If 
we can show that $I(C) = P^2$, then by (\ref{DIVEQ}) we must have $P(C) = P$.  
\\ \\
Since $E[P] \subset E(K_P)$ (and $E[2P] \subset E(K_P)$ when $P$ is even), the obstruction map on 
$\Hp^1(K_P, E[P])$ is given by the Hilbert symbol. By Lemma~\ref{lemma:sum a}, $\Delta_P(\res_{K_P/K} \xi)$ has order 
$P$ at $v_i$. Therefore $\Delta_P(\xi)$ has order $P$ at the prime $w$ satisfying $v_i\mid w$.
\\ \\
Suppose that $C$ has index $P\cdot D$ for some $D\mid P$. Then there exists some $\eta \in \Hp^1(K, E[PD])$ representing 
$C$ such that $\Delta_{PD}(\eta) = 0$. Let $j$ be the natural map $\Hp^1(K, E[P]) \to \Hp^1(K, E[PD])$. The classes 
$\eta$ and $j(\xi)$ represent the same curve $C$, so there exists some $x \in E(K)$ such that 
$\eta = j(\xi) + \iota_{PD}(x)$. Since $\Delta(\iota(x))=0$, by the remarks at the start of Section~\ref{3ASPECTS},

\[\Delta_{PD}(\eta) = \Delta_{PD}(j(\xi)) + \Li(\eta, x). \]
Recall that $\Li(\eta, x)$ is the Tate pairing. Let us consider this equality locally, at $w$. 
The left hand side is zero by hypothesis. By condition (SC2$'$), $x$ lies in $P\cdot E(K_w)$.  Since $P(C) \ | \ P$, 
the Tate pairing at $w$ is trivial. Hence $\Delta_{PD}(j(\xi))$ must be zero at $w$.  But by Proposition~\ref{SHARPROP},

\[ \Delta_{PD}(j(\xi)) = D \Delta_P(\xi). \]
We showed earlier that $\Delta_P(\xi)$ has order $P$ at $w$. Therefore $D=P$, and so $I(C) = P^2$.  
\\ \\
Let $\eta_i$ be the image of $\xi_i$ in $\Hp^1(K, E)$. It remains to show that $\res_v \eta_i = 0$ for $v\in S_K$. 
Recall that $\eta_i = \cores \Phi(\pi_i, 1)$ or $\cores \Phi(\pi_i,\pi'_i)$. For $w \mid v$ a place of $K_P$, the proof 
of Theorem~\ref{MT1} showed that the curves corresponding to $\Phi(\pi_i, 1)$ and $\Phi(\pi_i,\pi'_i)$ were trivial at $w$. 
But the corestriction map induces a homomorphism

\[ \oplus_{w\mid v} \Hp^1((K_P)_w, E) \to \Hp^1(K_v, E) \]
which proves that $\eta_i$ is trivial at $v$. This completes the proof of Theorem \ref{MT2}.

\subsection{Proof of Theorem \ref{MT3}}
\noindent Recall the following two ``classical'' instances of
period equals index.
\\ \\
(i) (Lang-Tate \cite{LT}) $F$ is the completion of a global field at a place
$v$, $E = \Jac(C)$ has good reduction, and $v$ does not divide the
period of $C$; and \\ (ii) (Cassels \cite{CasselsV}) $F$ is global and $C \in
\Sha(F,E)$.
\\ \\
Note that Lichtenbaum showed that $P=I$ for all genus one curves
defined over the completion of a global field. However, the result
of Lang and Tate, apart from being more elementary, is also more precise: 
they show also that an finite extension field $F'/F$ splits a genus one curve
$C_{/K}$ if and onlf if the period $P$ of $C$ divides the relative
ramification index $e(F'/F)$.  This will be used in the proof.
\\ \\
Take $S$ to be the union of the infinite places, the finite places
which divide $P$ and the places of bad reduction for $E$.  Let
$\{\eta_i\}_{i=0}^{\infty}$ be the sequence of places constructed
in \ref{MT2}.  We will show that for any positive integer $r$,
there exists a degree $P$ field extension $L/K$ such that the
classes are pairwise distinct, locally trivial, and of period $P$.
\\ \\
Indeed, let $S_r = \bigcup_{i=1}^r \supp(\eta_i)$.  We have $S_r
\cap S = \emptyset$, so that each $v_i \in S_r$ is a finite place
of good reduction for $E$ and residue characteristic prime to $P$.
\\ \\
For each $v_i \in S_r$, let $L_{i}/K_{v_i}$ be a totally ramified
extension of degree $P$.  There exists a degree $P$ global extension $L = L(r)$ of $K$ such
that for all $v_i \in S_r$, $L \otimes_K K_{v_i} \cong L_i$.\footnote{This is a standard weak approximation / Krasner's 
Lemma argument: c.f. \cite[p. 2]{WCI}.}  By the results of Lang and Tate cited above, $\eta_i|_{L}$ is locally
trivial.  Moreover, since $\eta_i = \eta_i - \eta_0$ has index
$P^2$ and $L/K$ is a degree $P$ extension, $I(\eta_i|_{L}) \geq
P$.  But on the other hand, by (ii) above, $I(\eta_i|_{L}) =
P(\eta_i|_{L}) \ | \ P(\eta_i) = P$, so for all $i$, $1 \leq i
\leq r$, $\eta_i|_{L}$ has period and index equal to $P$.  \\ \\
The only worry is that their restrictions are not
distinct.  But suppose that $\eta_i|_{L} = \eta_j|_{L}$.  Then
$\eta_i-\eta_j$ would lie in the kernel $\res_{L}$.  This would
imply that $I(\eta_i-\eta_j) \ | \ P$, which we have arranged not
to be the case.

\subsection{Remarks about ramification} \textbf{} \\ \\ \noindent
The proof of Corollary 3 differs from that of \cite[Theorem 1]{WCI} in that we explicitly make use of
extensions $L/K$ that are ramified at many primes.  Given our
strategy of proof, this is unavoidable: using the result (i) of
Lang-Tate cited above, the number of order $P$ elements in
$\res_L(\Hp^1(K,E)) \cap \Sha(L,E)$ can be bounded in terms of the
number of ramified primes of $L/K$.  It is interesting to ask
whether this same boundedness result holds for order $P$ elements
in $\Sha(L,E)$, and conversely, whether the number of order $P$
elements of $\Sha(L,E)$ necessarily approaches infinity with the
number of ramified primes.
\\ \\
Both of these questions have affirmative answers when $P = 2$,
according to work of H. Yu \cite{Yu}.  Given a quadratic extension
$L/K$, Yu computes the order of the kernel and cokernel of the
natural map $\Sha(K,E) \oplus \Sha(K,E^{\chi}) \ra \Sha(L,E)$;
here $E^{\chi}$ is the twist of $E_{/K}$ by the quadratic
character $\chi$ of $L/K$.  In particular, one can deduce
Theorem \ref{MT3} for $P = 2$ from H. Yu's work, with one
caveat: his analysis is conditional on the finiteness of
$\Sha(K,E)$. That the existence of an infinite subgroup of
$\Sha(K,E)$ would hamper our ability to show that $\Sha(L,E)[2]$
is large is somewhat curious, but seems to be the true state of affairs.
\\ \\
The consistency of Theorem \ref{MT3} with the results of \cite{Yu}
might thus be regarded as some confirmatory evidence for the
finiteness of Shafarevich-Tate groups. How seriously such evidence
ought to be taken is, of course, up to the reader to decide.

\section{Further problems}
\noindent Whereas in $\S 1.3$ we looked into the history of the period-index problem 
for genus one curves, in this final section we wish to look forward, by identifying and 
discussing some problems that remain open.
\\ \\
We assume that $E_{/K}$ is an elliptic curve and $P \ | \ I \ | \ P^2$ are positive integers.  However, we now 
allow the characteristic of $K$ to divide $P$.   
\begin{prob}
\label{PROB1} Find necessary and sufficient conditions on $E$ and
$K$ such that there exist infinitely many $\eta \in \Hp^1(K,E)$
such that $P(\eta) = P$, $I(\eta) = I$.  In particular, determine
whether this holds for every elliptic curve over an infinite,
finitely generated field.
\end{prob}
\noindent 
Of course our Theorem \ref{MT2} answers this question under certain, rather restrictive, 
hypotheses.  The case of $P = 2$ over a global field of characteristic different from $2$ is handled in 
\cite{P2}, whereas in \cite{Shahed08} an analogue of Theorem \ref{MT2} is proved under weaker hypotheses on the 
Galois module structure of $E[P^*]$: it is sufficient for $K$ to contain the $(P^*)$th roots of unity (i.e., that 
$\bigwedge^2 E[P^*]$ be a trivial Galois module) \emph{or} to contain a $K$-rational order $P^*$ cyclic subgroup scheme. 
\\ \\
In general, we have found it significantly easier to construct examples with $I = P^2$ rather than $I < P^2$.  
The following problem is motivated by a desire to show that this is the true state of affairs.

\begin{prob}
\label{PROB2} Show that ``most'' genus one curves of period $P$
have index $P^2$.
\end{prob}
\noindent To be sure, part of the problem is to find a precise
statement.  The interpretation we have in mind involves first constructing a ``versal'' parameter 
space $\mathcal{S}_P$ for curves of genus one and period $P$ over $K$.  In other words, $\mathcal{S}_P$ 
is a representable functor from the category of field extensions $L/K$ to the category of sets, together with functorial 
and surjective maps from $L/K$ to the set of isomorphism classes of genus one curves of period $P$.  For instance, 
$\mathcal{S}_2$ can be taken to be an open subset $U$ of $\mathbb{A}^8$, and then a versal family is the subset of 
$U \times \PP^3$ given by the system
\[ t_1 X^2 + t_2 Y^2 = Z^2,\ \  W^2 = t_3 X^2 + t_4 XY + t_5 XZ + t_6 Y^2 + t_7 YZ + t_8 Z^2.\]
Then one could construe Problem \ref{PROB2} either as saying that the generic point of $\mathcal{S}_P$ has index $P^2$, 
or as saying that the set of $K$-rational points of $\mathcal{S}_P$ for which $I < P^2$ is somehow sparse.%
\begin{prob}
\label{PROB3} Show that $\Delta_P(\Hp^1(K,E[P]))$ consists of Brauer
classes with period equals index (and perhaps even of cyclic
algebras).
\end{prob}
\begin{prob}
\label{PROB4} Construct an analogue of O'Neil's period-index obstruction map 
when $P$ is a power of the residue characteristic.
\end{prob}
\noindent If $K$ is perfect, then this is not very interesting:
taking $E[P]$ in the naive sense---i.e., as the sub-Galois module
of $E(\overline{K})$ consisting of elements killed by $P$---the
Kummer sequence
\[0 \ra E[P] \ra E(\overline{K}) \ra E(\overline{K}) \ra 0 \]
still holds, and since $\#E[P] \ | \  P$, every lift of $\eta \in
\Hp^1(K,E)[P]$ to $\xi \in \Hp^1(K,E[P])$ can be split by a degree
$P$ extension, i.e., $P = I$ in this case.  Alternately, it is
known that when $K$ is perfect of characteristic $p > 0$, $\Br(K)[p^{\infty}] = 0$. 
\\ \\
In the nonperfect case, multiplication by $P$ will not be
surjective on $E(\overline{K})$---remember that $\overline{K}$
denotes the \emph{separable} closure!---so that Kummer theory
is inapplicable. One can check that definitions (2) and (3) of the
period-index obstruction map go through in this case, although if
one insists on Galois cohomology definition (1) breaks down.
\\ \\
Nevertheless, Mumford went to some trouble to present a theory of
theta group \emph{schemes} which remains valid in all (odd)
characteristics: $\sG_L$ is in general an extension of the finite
flat group scheme $E[P]$ by $\Gm$.  One should still be able to
define a map $\Delta: \Hp^1(K,E[P]) \ra \Hp^2(K,\Gm)$, where the
cohomology is now \emph{flat} cohomology. What remains open in this 
case is the explicit computation of $\Delta$.\footnote{In fact we have some preliminary results in this 
direction, including a new ``cohomological symbol'' coming from supersingular elliptic curves.}  The relation with 
Problem \ref{PROB3} in this case seems especially interesting.
\begin{prob}
\label{PROB5} Decide whether the sequence (2) is always split.
\end{prob}
\noindent As we discussed in $\S 2.4$, this is an absolutely
fundamental question: it is equivalent to the tightest possible
relationship between the obstruction map $\Delta_P$ and the
period-index discrepancy $\frac{I}{P}$, since the conjecture holds
if and only if we have equality in~\eqref{FUNDINEQ}.  In this latter form
the problem is closely related to a question asked by O'Neil at
the end of $\S 2$ of \cite{O'Neil}.  

\begin{prob}
Explore relations with period-index problems for curves of higher
genus and for torsors of higher-dimensional abelian varieties.
\end{prob}
\noindent The prior work \cite{WCII} considers the case of torsor
under abelian varieties.  Some of the methods of the present work
could be adapted to the higher-dimensional case: for instance,
Theorem \ref{MT3} should hold for abelian varieties over global
fields which are principally polarized and have trivial Galois
action on their N\'eron-Severi group (with essentially the same
proof).  But the precise relation between the quantity $I/P$ and
the period-index obstruction map is, as yet, more mysterious in the
higher-dimensional case. 
\\ \\ 
Perhaps the most important open problem is to relate the period-index problem on a curve $C$
of higher genus $g$ to the period-index problem on its Jacobian
abelian variety.  In particular, can $I(C)/P(C)$ be computed via
some cohomological obstruction map?

\end{document}